\documentclass[12pt, leqno]{article}

\usepackage[margin=1in]{geometry}
\usepackage{amsmath, amssymb, amsthm, mathtools}
\usepackage{mathrsfs}
\usepackage{enumerate}
\usepackage[colorlinks=true,linkcolor=blue,citecolor=blue,urlcolor=blue]{hyperref}
\usepackage{cite}
\usepackage{xcolor}
\usepackage{authblk}

\newtheorem{theorem}{Theorem}[section]

\newtheorem{proposition}[theorem]{Proposition}
\newtheorem{lemma}[theorem]{Lemma}

\theoremstyle{definition}
\newtheorem{definition}[theorem]{Definition}

\newtheorem{question}[theorem]{Question}

\newtheorem{remark}[theorem]{Remark}
\newtheorem{example}[theorem]{Example}

\numberwithin{equation}{section}

\newcommand{\R}{\mathbb R}
\newcommand{\Rn}{\R^n}
\newcommand{\N}{\mathbb N}

\newcommand{\germ}{\mathscr G^+}

\newcommand{\jet}{\mathscr J}
\newcommand{\norm}[1]{\left\lVert #1 \right\rVert}
\newcommand{\abs}[1]{\left\lvert #1 \right\rvert}
\newcommand{\cN}{\mathcal N}

\newcommand{\LA}[1]{\refstepcounter{equation}\text{(\theequation)}\label{#1}}
\newcommand{\LAQ}[2]{\begin{itemize}\item[\LA{#1}]{#2} \end{itemize}}

\DeclareMathOperator{\dist}{dist}

\DeclareMathOperator{\ord}{ord}

\newcommand{\set}[1]{\left\{#1\right\}}
\newcommand{\void}{\varnothing}
\newcommand{\brac}[1]{\left(#1\right)}
\newcommand{\eqindent}{\displayindent0pt\displaywidth\textwidth}

\renewcommand{\epsilon}{\varepsilon}
\newcommand{\eps}{\epsilon}
\renewcommand{\phi}{\varphi}

\newcommand{\ring}{\mathcal R}
\newcommand{\cone}{\mathfrak C}
\newcommand{\sa}{\mathcal S}
\newcommand{\sat}{\operatorname{Sat}}
\newcommand{\module}{\mathfrak M}
\newcommand{\qmodule}{\mathfrak Q}
\newcommand{\lin}{\mathfrak L}
\renewcommand{\subset}{\subseteq}

\title{Semialgebraic ${C^m}$ solutions to a univariate system of linear inequalities with Nash forcing}

\author[1]{Fushuai Jiang}
\author[2]{Garving K. Luli}
\affil[1]{Department of Mathematics, City University of Hong Kong}
\affil[2]{Department of Mathematics, University of California - Davis}
\date{}

\begin{document}

\maketitle

\begin{abstract}
    Let $(A_{ij}(x))_{1 \leq i \leq N, 1 \leq j \leq M}$ be a matrix of univariate semialgebraic functions. We study the solvability of the system of linear inequalities
    \begin{equation*}
        \sum_{j=1}^M A_{ij}(x)F_j(x)\leq f_i(x)
        \text{ \quad for } i = 1, \ldots, N,
    \end{equation*}
    where we assume each forcing $f_i$ is a Nash function. We show that the $C^m$ solvability of the system can be characterized by finitely many linear ordinary differential inequalities in $f$ with semialgebraic coefficients depending only on $(A_{ij})$.  When the system is solvable, the solution may be chosen semialgebraic, and the proof gives a constructive procedure in the semialgebraic sense. We also study the semimodule structure of admissible forcings, establishing finite-generation and lack thereof in various settings. 
\end{abstract}

\section{Introduction}

Fix integers $m \geq 0$, $M,N \geq 1$. For $1 \leq i \leq N$, $1 \leq j \leq M$, let $A_{ij}:\R \to \R$ be fixed semialgebraic functions, i.e., their graphs are finite unions of sets defined by polynomial equalities and inequalities~\cite{BasuPollackRoy2006}. Given $f = (f_1,\cdots, f_N) \in C^\infty(\R,\R^N)$, we ask whether there exists $F = (F_1,\cdots, F_M)\in C^m(\R,\R^M)$ that solves the system
\begin{equation}
    \sum_{j=1}^M A_{ij}(x)F_j(x) \leq f_i(x)
    \text{\quad for all $x \in \R$ and $i = 1, \cdots, N$.}
    \label{eq:system main}
\end{equation}
We will adopt the terminology from~\cite{brenner2006continuous} and call $f = (f_1, \cdots, f_N)$ a \emph{forcing vector}, or simply forcing. 

In this paper, we show in Theorem~\ref{thm:main} that, assuming each $f_i$ is Nash, the $C^m$ solvability of~\eqref{eq:system main} can be characterized by finitely many local linear-differential sign conditions on the data. When the system~\eqref{eq:system main} is solvable, the solution can be taken to be semialgebraic, and the proof gives a constructive procedure.

\medskip

To lay some background on the problem at hand, let us review an example of Epstein and Hochster~\cite{EpsteinHochster2018}. Consider the single  \emph{linear equation}
\begin{equation}\label{eq:Epstein Hochster example}
    x^2F_1+y^2F_2+xyz^2F_3=f(x,y,z).
\end{equation}
There exist continuous $F_1,F_2,F_3$ (no differentiability required) satisfying \eqref{eq:Epstein Hochster example} if and only if
\begin{equation}
    \begin{cases}
        f\left( x,y,z\right) =\frac{\partial f}{\partial x}\left( x,y,z\right) = \frac{\partial f}{\partial y}\left( x,y,z\right) =0 
        &\text{ for $x = y = 0$ and $z \in \R$}
        \\
        \frac{\partial ^{2}f}{\partial x\partial y}\left( x,y,z\right) =\frac{\partial ^{3}f}{\partial x\partial y\partial z}\left( x,y,z\right) =0 
        &\text{ at $x = y = z = 0$}
    \end{cases} \ .
    \label{eq:EpsteinHochster}
\end{equation}

The differential \emph{equality} characterization in~\eqref{eq:EpsteinHochster} is no coincidence. In~\cite{FeffermanLuli2021Equations,FeffermanLuli2021Generators}, C.~Fefferman and the second author proved that the corresponding problem for semialgebraic systems of \emph{linear equations} on $\Rn$ 
\begin{equation}
    \sum_{j=1}^M A_{ij}(x)F_j(x) = f_i(x)
    \text{\quad for all $x \in \Rn$ and $i = 1,\cdots, N$,}
    \label{eq:system equations}
\end{equation}
has a finite differential criterion. 

\begin{theorem}[Fefferman-Luli~\cite{FeffermanLuli2021Equations,FeffermanLuli2021Generators}]
\label{thm:feffermanluli}
Fix $m\geq 0$, and let $(A_{ij}(x))_{1\leq i\leq N,\ 1\leq j\leq M}$ be a matrix of semialgebraic functions on $\mathbb R^n$. Then there exist $\bar m\geq m$ and a finite list of linear partial differential operators
\[
L_\nu f(x)
=
\sum_{i=1}^N
\sum_{|\alpha|\leq \bar m}
a_{\nu i\alpha}(x)\partial^\alpha f_i(x), \quad \nu = 1, \cdots, \nu_{\max},
\]
with semialgebraic coefficient functions $a_{\nu i\alpha}$, such that~\eqref{eq:system equations} admits a solution $F\in C^m(\mathbb R^n,\mathbb R^M)$ if and only if $L_\nu f \equiv 0$ for every $\nu = 1, \cdots, \nu_{\max}$.
\end{theorem}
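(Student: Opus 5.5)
The plan is to reduce the solvability of \eqref{eq:system equations} to a problem about $C^m$ sections of a bundle of jets. I would then run a Glaeser-type refinement semialgebraically and read off the differential operators $L_\nu$ from the stabilized bundle. Write $\mathcal P$ for the space of $m$-jets (polynomials of degree $\leq m$) on $\Rn$. For each $x$, consider the linear subspace
\[
H_x=\Bigl\{(P,Q)\in\mathcal P^M\times\mathcal P^N:\ \textstyle\sum_j A_{ij}(x)P_j(x)=Q_i(x)\ \text{for all } i\Bigr\}.
\]
A $C^m$ solution $F$ for a given $f$ is the same as a $C^m$ section of the affine bundle $H^f_x=\{P:(P,J^m_xf)\in H_x\}$. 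By Fefferman's theorem on $C^m$ sections of bundles (the Glaeser refinement criterion, which I take as the underlying tool), $F$ exists if and only if the stable Glaeser refinement of $(H^f_x)_x$ has all fibers nonempty.

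The first step is to carry out the refinement on the \emph{linear} bundle $(H_x)_x$ in the joint variable $(F,f)$ rather than on each $H^f_x$. Each Glaeser refinement $H_x\mapsto \tilde H_x$ keeps those $(P,Q)\in H_x$ that are limits of Whitney-compatible families from nearby fibers. The set $\{(x,\tilde H_x)\}$ is first-order definable from $\{(x,H_x)\}$ and $A$, so by Tarski--Seidenberg each iterate is a semialgebraic family of linear subspaces. Fiber dimensions only drop, so the iteration stabilizes after at most $\dim(\mathcal P^{M+N})\cdot(\text{const})$ steps at a bundle $H^\ast_x$.

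The second step is to relate $H^\ast$ to the refinement of $H^f$. For smooth $f$, the jets $J_xf$ are themselves Whitney compatible, so the fiber over $J_xf$ should commute with refinement. Concretely, I expect the refined bundle of $H^f$ to equal $\{P:(P,J_xf)\in H^\ast_x\}$. Then solvability becomes
\[
J_xf\in \pi(H^\ast_x)\quad\text{for all }x,
\]
where $\pi$ is the projection onto the $f$-component. This is the main obstacle, because the commutation is not literally true at order $m$. Refinements of the $f$-component see only $m$-jets, whereas compatibility of $f$ with limits may involve higher derivatives.

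To handle this, I would replace the $f$-jets by $\bar m$-jets with $\bar m$ large, and use a semialgebraic stratification plus a Łojasiewicz/Hestenes-type argument to show that finitely many extra derivatives of $f$ suffice. The justification is that semialgebraic coefficients degenerate at only polynomial rates near the bad strata, and this bounds the required $\bar m$. Alternatively, I would work with $C^\infty$ forcings and a smooth-forcing version of the refinement, proving the commutation by induction on the refinement steps.

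Finally, $\pi(H^\ast_x)$ is a semialgebraic family of linear subspaces of $\mathcal P^N$ (of $\bar m$-jets). I would stratify $\Rn$ into finitely many semialgebraic cells on which its dimension is constant. On each cell, Cramer's rule yields semialgebraic linear functionals $\lambda_{\nu,x}$ cutting out $\pi(H^\ast_x)$. Extending these by zero off the cell and collecting them over all cells gives the operators $L_\nu f(x)=\lambda_{\nu,x}(J^{\bar m}_xf)$. These have the required form $\sum a_{\nu i\alpha}(x)\partial^\alpha f_i(x)$ with semialgebraic $a_{\nu i\alpha}$, and $L_\nu f\equiv0$ for all $\nu$ is exactly the criterion $J_xf\in \pi(H^\ast_x)$ for all $x$.
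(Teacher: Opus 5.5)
The paper does not prove this theorem. It is quoted from Fefferman--Luli as background, so there is no in-paper argument to compare with, and your proposal has to stand on its own.

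Much of it does. Tarski--Seidenberg makes each refinement of the joint bundle semialgebraic, and the annihilators of $\pi(H^\ast_x)$ can be chosen semialgebraically cell by cell. Necessity is also proved: if $AF=f$ with $F\in C^m$ and $f$ smooth, then $x\mapsto (J^m_xF,J^{\bar m}_xf)$ is Whitney compatible (at orders $m$ and $\bar m$ respectively) and lies in every refinement, so $J^{\bar m}_xf\in\pi(H^\ast_x)$.

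The gap is sufficiency. It rests entirely on the claim that the stable refinement of $H^f$ equals $\{P:(P,J^{\bar m}_xf)\in H^\ast_x\}$. You correctly flag this as the main obstacle, but then you only assert that a stratification plus a \L{}ojasiewicz/Hestenes-type argument shows that finitely many derivatives suffice. That claim is the substance of the theorem, not a technical detail.

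Only one inclusion is easy: each refinement of $H^f$ lies in the corresponding fiber of the joint refinement. The joint refinement can be strictly more generous. When it tests $(P_0,J_{x_0}f)$, the $f$-slot at nearby points $x_1,\dots,x_k$ may be any $Q_i$ that is Whitney-close to $J_{x_0}f$, rather than $J_{x_i}f$ itself. To turn this into a witness for $H^f$, you must correct each $P_i$ by some $\Delta_i$ with $A(x_i)\Delta_i(x_i)=f(x_i)-Q_i(x_i)$. These $\Delta_i$ must be Whitney $\epsilon$-compatible at order $m$ for every configuration of $k^\#$ points. That includes clusters with $|x_i-x_j|\ll|x_i-x_0|$, and points near other singular strata of the merely semialgebraic (possibly discontinuous) $A$.

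At later stages the bundle is no longer $\{A(x)P(x)=Q(x)\}$. It is a refined family encoding limits from lower-dimensional strata, so the problem recurs with new data. A \L{}ojasiewicz bound on the inverse of $A(x)$ on its image controls sizes near one point. It does not give Whitney compatibility of the corrections across all these configurations. What is missing is a genuine lemma --- for instance, that off a lower-dimensional semialgebraic set each refinement of $H^f$ is cut out by linear conditions on a finite jet of $f$ with semialgebraic coefficients. This must be combined with an induction over strata that tracks how large $\bar m$ must be. Your alternative (``induction on the refinement steps'' for smooth forcings) would need exactly this lemma at each step.

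Your diagnosis of the order-$m$ failure is right. For $n=1$, $m=0$, $A(x)=x^2$, the joint bundle of $0$-jets is already Glaeser stable and yields only $f(0)=0$, although $f'(0)=0$ is also necessary. Putting $\bar m$-jets in the $f$-slot repairs this example, but one example is not the general argument.

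Finally, ``fiber dimensions only drop'' does not by itself bound the number of refinement steps, since the drops can occur at different points at different stages. Cite the known stabilization theorem for Glaeser refinements instead.
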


We give a brief account of recent progress leading to Theorem~\ref{thm:feffermanluli}, which is closely related to the classical theory of ideals of differentiable functions~\cite{whitney1948ideals}.
For systems of \emph{linear equations}, continuous ($C^0$) solvability leads to the continuous closure of ideals. Brenner introduced algebraic tests through axes closure~\cite{brenner2006continuous}; 
J.~Koll\'ar developed a purely algebraic construction of continuous closure for torsion-free modules~\cite{kollar2012continuous-closure}; 
Fefferman and Koll\'ar provided an alternative solution combining Whitney extension techniques with an algebraic descent procedure~~\cite{FeffermanKollar2013Continuous}. 
As a companion result in the $C^m$ setting, Fefferman and Luli further gave an algorithm for computing its generators \cite{FeffermanLuli2021Generators}. See also~\cite{bierstone2026curve} for a concurrent one-dimensional result.

See \cite{BierstoneCampesatoMilman2021,Fefferman2006WhitneyCm,FeffermanIsraelLuli2016Selection,FeffermanLuli2022Plane,JiangLuliONeill2022Shape,LuliONeill2023Inequalities,JiangLuliONeill2022Smooth, brenner2006continuous,kollar2012continuous-closure,FeffermanKollar2013Continuous,kucharzkurdyka2017linear,kollarnowak2015rational,bierstone2026curve,FeffermanLuli2014BHK} and the references therein for more details.

\medskip

In \cite{LuliONeill2023Inequalities}, the second author and K.~O'Neill ask for an inequality analogue of this theorem. They show that such a criterion fails in general for continuous solutions in dimensions $n\geq 2$. The root cause is that Glaeser refinement in dimension $n\geq 2$ may not preserve polygonality.

The corresponding problem for one dimension was left open and is fundamentally different: there are only two directions to take limits, and univariate semialgebraic functions have finite Puiseux asymptotics. 

In our concurrent paper~\cite{jiangluli2026semialgebraic}, we show that in the univariate setting, such a characterization is possible if we seek a $C^m$ solution for $m \in \set{0,1,2}$ without the Nash assumption on the $f_i$'s. In that paper, we also showed as a pleasant surprise that the analogous result fails for $m > 2$, as jets can no longer capture small-amplitude oscillations. The dichotomy between $C^2$ and $C^{m>2}$ is also a recurring theme in the authors' previous works on nonnegative extension~\cite{jiang2024flat,jiangluli2020-finiteness,jiangluli2020-operator,jiangluli2021-alg}.

However, with an additional rigidity assumption (such as Nash in this paper) on the forcing vector $f$, such a characterization is still possible for general $C^m$.

\subsection{Main results}

For the rest of the paper, we use $\cN(I,\R^N)$ to denote the space of maps on an open semialgebraic interval $I$ whose components are Nash, i.e., $C^\infty$ and semialgebraic. 

The main theorem of the paper is as follows. 

\begin{theorem}\label{thm:main}
    Let $(A_{ij}(x))_{1\leq i \leq N, 1 \leq j \leq M}$ be a matrix of univariate semialgebraic functions. For each $m \in \mathbb N_0$, there exists an integer $\bar m \geq m$ and a finite semialgebraic partition $\mathcal C$ of $\R$ such that on each cell $C \in \mathcal C$, there are 
    \begin{itemize}
        \item finitely many linear ordinary differential expressions
    \begin{equation}
    \eqindent
        L_{C,\nu}f(x) = \sum_{i=1}^N \sum_{r=0}^{\bar m} a_{C,\nu ir}(x)f_i^{(r)}(x)
        \text{\quad,\quad}
        \nu = 1, \cdots, \nu_{\max,C}
    \end{equation}
    whose coefficients are semialgebraic on $C$, and
    \item a finite Boolean formula $\mathcal S_C$ in the sign conditions 
    \begin{equation*}
    \eqindent
        \set{L_{C,\nu}f \ \Omega \ 0: \Omega \in \set{>,=,<}, \ C \in \mathcal C,\ \nu = 1, \cdots, \nu_{\max,C}}
    \end{equation*}
    \end{itemize}
    such that for every Nash vector $f \in \cN(\R,\R^N)$, the following are equivalent.
    \begin{enumerate}[(A)]
        \item There exists a $F \in C^m(\R,\R^M)$ satisfying $A(x)F(x)\leq f(x)$ for all $x \in \R$.
        \item For every $C \in \mathcal C$ and every $x \in C$, the formula $\mathcal S_C(L_{C,1}f(x), \ldots, L_{C,\nu_{\max,C}}f(x))$ holds. 
    \end{enumerate}
    Moreover, if (B) holds, the function $F$ in (A) can be taken to be semialgebraic.
    The partition, the coefficient functions, Boolean formulas, and the integer $\bar m$ depend only on $(A_{ij})$, $M$, $N$, and $m$.
\end{theorem}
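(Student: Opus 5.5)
We plan to reduce the global problem to finitely many local problems: generic points of open cells, and the finitely many exceptional points where the $A_{ij}$ fail to be analytic or change rank or sign pattern. By semialgebraic cell decomposition, we choose a finite partition $\mathcal C$ of $\R$ into points and open intervals with three properties. On each open interval $I$ the $A_{ij}$ are Nash. The set of rows $i$ that are identically zero on $I$ is constant. The combinatorial type of the polyhedron $K(x)=\{v\in\R^M: A(x)v\le f(x)\}$ is also constant on $I$, meaning the list of index sets $S\subset\{1,\dots,N\}$ for which the rows $A_S(x)$ have constant rank and constant kernel dimension. On such an $I$, the local solvability at each $x$ is a Farkas-type condition: $K(x)\neq\emptyset$ if and only if $\lambda\cdot f(x)\ge 0$ for every extreme ray $\lambda(x)$ of the cone $\{\lambda\ge 0:\lambda^T A(x)=0\}$. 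These extreme rays can be chosen as finitely many semialgebraic (indeed Nash, after refining) vector fields $\lambda_{\nu}(x)$. This yields zeroth-order conditions $L_{C,\nu}f=\lambda_\nu\cdot f\ge 0$ on $C$.

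On an open cell $I$ where the data is Nash, we then show that pointwise nonemptiness of $K(x)$ plus the sign conditions already give a $C^m$, and indeed Nash, solution on compact subintervals. The plan is to run a Glaeser-type refinement that is algebraic in $x$. For each semialgebraic choice of active index set $S$ near which $K(x)$ degenerates, we note the following. If $\lambda_\nu\cdot f$ vanishes identically on a subinterval, then the inequalities in $S$ become forced equalities. By Theorem~\ref{thm:feffermanluli} in dimension one, $C^m$ solvability of those equalities is equivalent to finitely many linear ODE identities $L f\equiv 0$. Because $f$ is Nash, each $\lambda_\nu\cdot f$ either vanishes identically on the cell or has only finitely many zeros. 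After refining $\mathcal C$ by a Boolean formula recording which case occurs, we can pick a semialgebraic selection $F(x)\in K(x)$: a Nash vertex selection via the Cramer formula on a fixed active set, perturbed into the interior where the interior is nonempty. Such a selection is $C^\infty$ away from finitely many points.

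The heart of the proof, and the step we expect to be the main obstacle, is the behaviour at the finitely many cell endpoints $x_0$, including $\pm\infty$, and at the isolated zeros of $\lambda_\nu\cdot f$. There, $C^m$ gluing requires jet compatibility from both sides. We plan to work with Puiseux expansions: on $(x_0,x_0+\delta)$ every semialgebraic coefficient has a convergent expansion in $(x-x_0)^{1/q}$, and $f$ is analytic at $x_0$. Existence of a $C^m$ solution near $x_0$ becomes the existence of a polynomial jet $P\in\jet^m_{x_0}$ together with semialgebraic one-sided corrections $F=P+o(|x-x_0|^m)$ satisfying the inequalities. The rigidity of Nash $f$ is what makes this finite. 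Since $f$ and all $A_{ij}$ have finite Puiseux order, whether $\lambda_\nu\cdot f$ is strictly positive, identically zero, or vanishing to order $k$ at $x_0$ is detected by finitely many derivatives $f^{(r)}(x_0)$, $r\le\bar m$. Here $\bar m$ is bounded in terms of the Puiseux orders of the $A_{ij}$ and of $m$. This is exactly the failure mechanism of the $m>2$ case without the Nash hypothesis, namely small oscillations invisible to jets, and the Nash assumption rules it out.

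Concretely, we will compare the orders of the leading Puiseux terms of $A(x)$ and of $f$. We will then reduce the question of which jets $P$ work to a finite linear feasibility problem over the coefficients of $P$ and the leading coefficients. By Fourier--Motzkin elimination, that problem is a Boolean combination of linear sign conditions on $(f^{(r)}(x_0))_{r\le\bar m}$. These are the formulas $\mathcal S_C$ on the zero-dimensional cells. Finally, we will patch the local semialgebraic solutions with a semialgebraic partition of unity: piecewise polynomial $C^m$ bumps, which are semialgebraic, rather than $C^\infty$ ones. Because the constraint set is convex at each $x$, a convex combination of solutions is again a solution. This gives a global semialgebraic $F$, and the necessity of (B) follows from the construction, since each condition was derived as a necessary consequence of (A).
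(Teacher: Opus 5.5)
You have the paper's overall architecture: pointwise Farkas/Fourier--Motzkin conditions on open cells, jet analysis at the finitely many exceptional points, and gluing with semialgebraic $C^m$ cutoffs using convexity of $K_f(x)$. There are two genuine gaps: your open-cell construction fails, and the sufficiency direction at exceptional points is not proved.

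\textbf{Open cells.} This is where the Nash hypothesis does its main work, and your construction does not use it. The combinatorial type of $K_f(x)$ is not determined by $A$. Which constraints are active at a vertex depends on $f(x)$, and $K_f(x)$ has no vertices at all when $\ker A(x)\neq 0$. Already for $M=1$ with constraints $f_3\le F$, $f_4\le F$, $F\le f_1$, $F\le f_2$, every vertex is $\max(f_3,f_4)$ or $\min(f_1,f_2)$. So no fixed active set gives a feasible Cramer selection on the whole cell. Switching active sets, or averaging into the interior, gives a selection that is in general only Lipschitz at $f$-dependent points: where the active set switches, or where $K_f(x)$ loses its interior and the perturbation must shrink to zero.

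You cannot repair this by treating such points like cell endpoints. Their location depends on $f$, while $\mathcal C$ must depend only on $A$. You would therefore have to prove that no jet condition ever arises there. Without rigidity this genuinely fails for $m\ge 3$, because $C^m$ is not a weak lattice (Nagel--Rudin). That oscillation phenomenon lives on the open cells, not at the jets where you placed it.

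The missing idea is Proposition~\ref{prop:nash weak lattice}. If $L_i\le U_j$ are Nash, then
$h=L_1+\frac{(U_2-L_1)(U_1-L_1)}{(U_2-L_1)+(U_1-L_2)}$ (set to $0$ where the denominator vanishes) is again Nash, because analyticity makes the singularities removable. This extends to finitely many bounds by Lemma~\ref{lem:weak lattice for arbitrary family}. Combined with Fourier--Motzkin elimination whose pivots are Nash of constant sign, it lifts a Nash selection one coordinate at a time. It shows that pointwise nonemptiness alone suffices on each regular interval (Proposition~\ref{prop:solution on regular interval}), with no interior points needing jet analysis.

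\textbf{Exceptional points.} Here your plan states the goal rather than a mechanism, and the finiteness claim you rely on is false as stated. Since $f$ is an arbitrary Nash vector, $\lambda_\nu\cdot f$ can vanish at $x_0$ to arbitrarily high order; take $f=(x-x_0)^k e_i$. Likewise, whether $\lambda_\nu\cdot f$ vanishes identically near $x_0$ is not determined by any bounded jet of $f$ at $x_0$: compare $f=0$ with $f=(x-x_0)^k e_i$. So admissibility of a jet $P$ cannot simply be read off from leading Puiseux coefficients. What must be shown is that only a threshold of bounded size matters, uniformly in $f$.

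The paper does this in three steps:
\begin{enumerate}[(i)]
\item It proves that $P$ is realized by a feasible one-sided selection iff $P(y)\in K_f(y)$ and $\operatorname{dist}_\infty(P(y+t),K_f(y+t))=o(t^m)$ (Theorem~\ref{thm:one sided selection at exceptional points}).
\item It invokes Proposition~\ref{prop:dist to polygon}. That result encodes this condition as a Boolean formula in $(P,\jet^{(s_y)}_yf)$, and supplies the gap $o(t^m)\Rightarrow O(t^{m+\eta})$ and a Hoffman-type bound with uniform loss $t^{-D_{\mathrm{dist}}}$. Then $P$ is eliminated (Proposition~\ref{prop:boolean exceptional points}).
\item For sufficiency, it builds the selection as $P+R_0+E$. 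Here $R_0$ comes from semialgebraic curve selection among feasible remainders of size $O(t^{m+\eta})$. Its Puiseux order gives $R_0^{(r)}=O(t^{m+\eta-r})$, hence a zero $m$-jet. The term $E$ is a flat feasible correction from the weighted Nash box selection (Lemma~\ref{lem:weighted nash box selection}).
\end{enumerate}
Your proposal contains neither the distance criterion nor any construction of a feasible correction with vanishing $m$-jet, so the sufficiency direction at exceptional points is unproved.
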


The proof also gives an effective semialgebraic construction of the solution if it exists. In particular, see Proposition~\ref{prop:nash weak lattice} and Remark~\ref{rem:curve-selection-constructive}.

\medskip

The differential description in Theorem~\ref{thm:main} also leads to an algebraic description of the admissible forcings on each regular interval $I$ in the sense of Definition~\ref{def:regular interval exception point} below. For the moment, it is harmless to pretend $I\subset \R $ is just an open and semialgebraic interval.

To start with, we define three objects.
\begin{itemize}
    \item Let the cone of admissible forcing vectors be
\begin{equation*} 
\eqindent
\cone_{A,I}^m(\cN(I)) := \set{ f\in\cN(I,\R^N): 
\text{ there exists }F\in C^m(I,\R^M) \text{ such that }A F\leq f }. 
\end{equation*} 
\item Let the lineality module of the admissible cone be
\begin{equation*}
\eqindent
    \lin_{A,I}^m(\cN(I)) 
:= \cone_{A,I}^m(\cN(I)) \cap \brac{-\cone_{A,I}^m(\cN(I))}.
\end{equation*}
\item Let $\cN(I)$ be the underlying ring of Nash functions on $I$ and let
\begin{equation*}
\eqindent
    \cN_+(I) := \set{ q\in\cN(I): q\geq0\text{ on }I }
\end{equation*}
 be the underlying semiring.
\end{itemize}

We will prove the following.

\begin{theorem}\label{thm:finite generation regular interval} 

Let $I\subset\R\setminus\Sigma_A$ be a regular interval in the sense of Definition~\ref{def:regular interval exception point}. 
Then $\lin_{A,I}^m(\cN(I))$ is a finitely generated $\cN(I)$-module. Moreover, the image of $\cone_{A,I}^m(\cN(I))$ in $\cN(I)^N \Big/ \lin_{A,I}^m(\cN(I))$ is generated as a semimodule over $\cN_+(I)$ by the coordinate classes $[e_1],\ldots,[e_N]$. 
Consequently, there exist $u_1,\ldots,u_r\in\cN(I)^N$ 
such that \begin{equation*} 
\cone_{A,I}^m(\cN(I)) = \sum_{\alpha=1}^r\cN(I)u_\alpha + \sum_{i=1}^N\cN_+(I)e_i. \end{equation*} 
\end{theorem}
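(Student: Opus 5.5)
On a regular interval $I\subset\R\setminus\Sigma_A$, I expect Definition~\ref{def:regular interval exception point} to let us put $A$ into a normal form with Nash entries and constant rank on $I$. The plan is to reduce the cone $\cone_{A,I}^m(\cN(I))$ to a pointwise linear-algebra description and then read off the generators.

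\textbf{Step 1: normal form and pointwise solvability.} Since the entries of $A$ are Nash on $I$ and the rank is constant, semialgebraic Gaussian elimination gives Nash invertible matrices $P(x)$ ($N\times N$) and $Q(x)$ ($M\times M$) on $I$, possibly after refining the interval, which the regularity assumption should already build in. These satisfy
\[
P(x)A(x)Q(x)=\begin{pmatrix} \mathrm{Id}_k & 0\\ 0 & 0\end{pmatrix}.
\]
The obstruction is then not $PA Q$ itself but the row-space description of $A$. Choose a Nash basis $w_1(x),\ldots,w_s(x)$ of the left kernel $\{w: w^TA=0\}$ on $I$, and a Nash parametrization $\lambda_1,\ldots,\lambda_t$ of the generators of the polyhedral cone $\{\lambda\geq 0: \lambda^TA=0\}$. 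On a regular interval I expect the combinatorial type of this cone to be constant, so these generators can be chosen Nash. By Farkas' lemma, pointwise solvability of $A(x)F\leq f(x)$ is equivalent to $\lambda_\ell(x)^Tf(x)\geq 0$ for all $\ell$.

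\textbf{Step 2: smoothness of the solution.} The key input is Theorem~\ref{thm:main}, or rather its proof on a regular interval. There the conditions should reduce to these zeroth-order sign conditions: on a regular interval away from $\Sigma_A$, a pointwise feasible Nash $f$ admits a Nash (hence $C^m$) solution $F$. I expect this to come from Proposition~\ref{prop:nash weak lattice}, which produces a Nash selection from the feasible polyhedra. Therefore
\[
\cone_{A,I}^m(\cN(I))=\{f\in\cN(I)^N:\ \lambda_\ell^Tf\geq 0\ \text{on } I \ \text{for all } \ell\}.
\]

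\textbf{Step 3: the lineality module.} From this description, $\lin_{A,I}^m(\cN(I))=\{f:\lambda_\ell^Tf\equiv 0\ \forall \ell\}$. This is the kernel of a Nash matrix of constant rank, where the constant rank holds again by regularity. Over a Nash interval it is a finitely generated module, in fact free: it is the module of Nash sections of a Nash subbundle of $I\times\R^N$, which is trivial over an interval. Alternatively, it is the $\cN(I)$-span of $\mathrm{Im}\,A$ together with the Nash solutions $F$ of $AF=f$. Either route gives generators $u_1,\ldots,u_r$.

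\textbf{Step 4: the quotient is generated by the $[e_i]$, the main obstacle.} Given $f$ in the cone, take the Nash solution $F$ from Step 2 and set $q:=f-AF\in\cN_+(I)^N$. Then
\[
f=AF+\sum_i q_ie_i .
\]
Each column $Ae_j$ lies in $\lin_{A,I}^m(\cN(I))$, since $\pm Ae_j$ are both admissible with $F=\pm e_j$. Hence $AF\in\lin_{A,I}^m(\cN(I))$, and $[f]=\sum q_i[e_i]$ with $q_i\geq 0$ Nash. This gives the displayed decomposition, noting that the reverse inclusion is clear because the cone is closed under addition and under multiplication by $\cN_+(I)$.

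The real work is therefore the existence of a Nash (not merely $C^m$) solution $F$ in Step 2. This is needed so that $q$ is Nash and nonnegative. I would obtain it from the constructive statement in Theorem~\ref{thm:main} together with Proposition~\ref{prop:nash weak lattice}. Concretely, I would use a semialgebraic selection of, for example, the barycenter of a bounded face of the feasible polyhedron. On a regular interval this selection is Nash because the active constraint set is locally constant off a finite set. The difficulty is the finitely many points where it changes, and I would smooth there by a Nash partition argument, or by using slack $f-AF$ absorbed into $\cN_+$.
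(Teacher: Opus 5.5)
Your plan works, and its core (Steps~2 and~4) is the paper's own proof. The lineality part takes a different route, and several auxiliary claims need fixing or deleting.

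Step~4 is the paper's argument for the quotient. A Nash selection $F$ with $AF\le f$ puts $AF$ in $\lin_{A,I}^m(\cN(I))$, since both $\pm AF$ are admissible. The slack $f-AF\in\cN_+(I)^N$ then gives $[f]=\sum_i (f-AF)_i[e_i]$. The Nash selection you single out as ``the real work'' is exactly Proposition~\ref{prop:solution on regular interval}, which is Fourier--Motzkin induction closed by Lemma~\ref{lem:weak lattice for arbitrary family} and Proposition~\ref{prop:nash weak lattice}. Step~2 should simply cite it. Drop the fallback in your last paragraph, because it would not work:
\begin{itemize}
\item Nash functions are real-analytic, so there are no Nash cutoff functions with which to glue local barycenter selections across the transition points.
\item The fibers $K_f(x)$ need not have any bounded face; they contain a line whenever $\ker A(x)\neq 0$.
\end{itemize}

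For finite generation of $\lin_{A,I}^m(\cN(I))$ you argue differently from the paper. The paper (Proposition~\ref{prop:lineality module}) shows closure under all of $\cN(I)$ by writing $r=\bigl(\tfrac{r+1}{2}\bigr)^2-\bigl(\tfrac{r-1}{2}\bigr)^2$ as a difference of elements of $\cN_+(I)$. It then uses only that $\cN(I)$ is Noetherian, so no structure of $A$ enters.

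Your description $\lin_{A,I}^m(\cN(I))=\{f:\lambda_\ell^Tf\equiv 0\ \forall \ell\}$ is correct once Step~2 is available, and it gives the module structure immediately. However:
\begin{itemize}
\item Take the $\lambda_\ell$ directly from the Fourier--Motzkin multipliers, which are Nash on a regular interval by Proposition~\ref{prop:Fourier Motzkin} and Definition~\ref{def:regular interval exception point}. Do not appeal to an unproved constancy of the combinatorial type of the Farkas cone.
\item Your freeness argument rests on a constant-rank claim and on Nash triviality of subbundles, and you justify neither. None of this is needed: any submodule of $\cN(I)^N$ is finitely generated because $\cN(I)$ is Noetherian.
\end{itemize}
Constant rank does in fact hold. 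The multiplier supports are constant on $I$, so $\dim\operatorname{span}\{\lambda_\ell(x)\}=|S|-\operatorname{rank}A_S(x)$ for a fixed row set $S$. This quantity is therefore both upper and lower semicontinuous. But it only buys freeness, which the statement does not ask for.

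Two side claims are false and should be removed.
\begin{itemize}
\item The lineality module is not the span of the image of $A$. For $A=(1,1)^T$ every forcing is admissible, so the lineality module is $\cN(I)^2\neq \cN(I)\,(1,1)^T$.
\item Regularity does not give the normal form of Step~1. Take $A$ with rows $(1,1)$ and $(1,1+x^2)$: no pivot coefficient changes sign and nothing is ever projected, yet $\operatorname{rank}A$ drops at $x=0$.
\end{itemize}
Refining $I$ is also not permitted, since the theorem concerns the whole regular interval. Since the normal form is never used afterwards, simply delete Step~1's first part.
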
 

The proof is based on a slack-variable representation $AF + \sigma = f$ of the system~\eqref{eq:system main}. 

The Nash hypothesis in Theorem~\ref{thm:finite generation regular interval} is essential. As we will see in Theorem~\ref{thm:quotient module polynomial not FG} below, finite generation may fail if the base ring is replaced by the restricted polynomial ring $\R[x]|_I$ while the matrix $A$ is allowed to have nonrational Nash entries.

\subsection{Structure of the paper}

Now we lay out the proof strategy of Theorem~\ref{thm:main} and the organization of this paper. 

In Section~\ref{sect:regular intervals and lattice}, we apply formal Fourier-Motzkin elimination to the fixed matrix $(A_{ij})$ and obtain a finite semialgebraic decomposition of $\R$ into regular intervals and exceptional points (Definition~\ref{def:regular interval exception point}). On regular intervals, the coefficients that arise in the elimination are smooth, and the pointwise nonemptiness of the solution fiber $K_f(x) = \set{v \in \R^M : A(x)v \leq f(x)}$ implies the existence of a $C^m$ selection. This step makes use of the assumption that $f$ is of Nash class, which admits a weak lattice structure (Section~\ref{sect:weighted weak nash lattice}). 

The entire Section~\ref{sect:exceptional points} is dedicated to the main compatibility issue at an exceptional point, where the coefficient from the Fourier-Motzkin elimination becomes singular. We give the main ingredients in Section~\ref{sect:weighted weak nash lattice}, which include a weighted version of the Nash weak lattice and a Nash selection with controlled order drop.
We give the punchline in Section~\ref{sect:extension at exceptional point}, which roughly states that an $m$-jet is feasible at an exceptional point $y\in \R$ if and only if $P(y) \in K_f(y)$ and the distance from $P(y+t)$ to $K_f(y+t)$ must be controlled asymptotically by $o(t^m)$ as $t \downarrow 0$.

In Section~\ref{sect:proof of main theorem}, we prove Theorem~\ref{thm:main} by gluing together solutions on regular intervals with solutions at exceptional points. The finite-Boolean criteria and semialgebraicity are preserved at every stage of the construction.

The proof of Theorem~\ref{thm:main} is constructive in the semialgebraic sense. On regular intervals, the Fourier-Motzkin projection and the Nash selection (Proposition~\ref{prop:nash weak lattice}) can be given explicitly. Near an exceptional point, the semialgebraic curve selection can be made effective. See Remark~\ref{rem:curve-selection-constructive}.

Finally, we study the algebraic structure of the admissible forcing vectors in Section~\ref{sect:generators}.

On each regular interval, the admissible Nash data form a semimodule over the semiring of nonnegative Nash functions. We prove Theorem~\ref{thm:finite generation regular interval}, i.e., the lineality space is a finitely generated Nash module, while the quotient by lineality is generated by the coordinate classes. 

We then examine the corresponding problem over the restricted polynomial ring. Theorem~\ref{thm:quotient module polynomial not FG} shows that finite generation can fail when the matrix has a nonrational Nash entry. This leaves open the more rigid algebraic case in which the matrix itself has polynomial entries, the \emph{system-of-equations} analogue of which has been an object of intensive study. See, e.g.,~\cite{kollarnowak2015rational,EpsteinHochster2018,kucharzkurdyka2017linear}. 

The section concludes with an additional Example~\ref{example:qjet}, where we provide a quadratic ring in two variables and show that even an admissible cone need not be finitely generated. This is regardless of whether the cone has a Boolean-finite description (which is already not always possible by~\cite{LuliONeill2023Inequalities}). This suggests that the univariate Nash result is close to the natural limit of the present method, while the case of a polynomial matrix over the restricted polynomial ring remains a natural algebraic problem. We collect this algebraic problem along with the issue of effective computation of generators in Question~\ref{quest:effective algebraic generators} at the end of the paper.

\section{Weak lattice and regular intervals}
\label{sect:regular intervals and lattice}

In this section, we define regular intervals and exceptional points (Definition~\ref{def:regular interval exception point}). Exceptional points occur when the coefficients of the Fourier-Motzkin elimination become singular and require separate treatment in the next section. On regular intervals, data are smooth (and hence Nash), and we show that a $C^m$ selection can be constructed using a weak lattice argument in the spirit of Nagel and Rudin~\cite{NagelRudin1974}.

\subsection{Univariate semialgebraic limits}
\label{sect:semialgebraic limit}

Fix $x_0 \in \R$. We use $\germ_{x_0}$ to denote the ring of right-hand semialgebraic germs at $x_0$. For the standard theory of one-sided semialgebraic germs and their identification with algebraic Puiseux series, we refer to~\cite[Section~3.3]{BasuPollackRoy2006}.

We recall an elementary result from~\cite[Lemma 3.1]{jiangluli2026semialgebraic}. 
Let $\phi\in\mathscr G^+_{x_0}$ be nonzero. Then there exist $c\neq0$ and $\gamma\in\mathbb Q$ such that
        \begin{equation*}
        \eqindent
            \phi(x) = c(x-x_0)^\gamma + o\brac{(x-x_0)^\gamma}
            \text{\quad as $x \downarrow x_0$.}
        \end{equation*}
In particular, the sign of $\phi(x)$ is eventually constant as $x\downarrow x_0$. As such, we may define the order of a semialgebraic right-hand germ to be
\begin{equation}
    \eqindent
    \ord_{x_0}(\phi)=\begin{cases}
        \gamma &\text{ if $\phi \in \germ_{x_0}$ is nonzero}
        \\
        +\infty &\text{ otherwise}
    \end{cases} \ .
    \label{eq:order def}
\end{equation}

\subsection{Weak lattice of smooth functions}
\label{sect:weak lattice}

Though the central assumption of this paper is the forcing $f$ being Nash, we introduce a slightly more general definition that potentially incorporates a wide class of functions. See Remark~\ref{rem:general weak lattice selection} after the main Proposition~\ref{prop:solution on regular interval}.

\newcommand{\wl}{\mathsf{WL}}
\begin{definition}\label{def:weak lattice}
    Let $I\subset \R$ be an open interval. 
    We say a vector space of continuous functions $\mathbb X(I)$ is a $\wl$ space if it satisfies the weak-lattice and closure properties. 
    \LAQ{eq:weak lattice}{
    If $L_1,L_2, U_1,U_2 \in \mathbb X(I)$ and $L_i \leq U_j$ for $i,j = 1,2$, then there exists $h \in \mathbb X(I)$ such that $L_i \leq h \leq U_j$ for $i,j = 1,2$.
    }
    \LAQ{eq:closure}{$\mathbb X(I)$ is closed under the operations $\phi\mapsto \phi^2$ and $\phi\mapsto\sqrt{1+\phi^2}$.}
\end{definition}

\begin{lemma}\label{lem:weak lattice for arbitrary family}
    Let $\mathbb X(I)$ be a $\wl$ space. 
    Let $p,q \in \mathbb N_{\geq 0}$. 
    If $L_1, \cdots, L_p, U_1, \cdots, U_q \in \mathbb X(I)$ satisfy 
    \begin{equation*}
        \max_{1\leq i \leq p}L_i \leq \min_{1\leq j \leq q}U_j, 
    \end{equation*}
    then there exists $h \in \mathbb X(I)$ such that 
    \begin{equation*}
        \max_{1\leq i \leq p}L_i \leq h \leq \min_{1\leq j \leq q}U_j.
    \end{equation*} 
    If $p$ or $q$ is zero, the respective lower or upper bound is removed. 
\end{lemma}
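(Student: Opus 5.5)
The plan is a double induction on $(p,q)$. The weak-lattice property \eqref{eq:weak lattice} is the base case, and the closure property \eqref{eq:closure} is used only when one side of the family is empty.

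\emph{Degenerate cases.} If $p=q=0$, we take $h=0\in\mathbb X(I)$.

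\emph{Case $p=0$, $q\ge1$.} We need an element of $\mathbb X(I)$ lying below $\min_j U_j$.
\begin{itemize}
\item For two functions $U_1,U_2$ we set
\[
m(U_1,U_2):=\tfrac12(U_1+U_2)-\sqrt{1+\bigl(\tfrac12(U_1-U_2)\bigr)^2}.
\]
This lies in $\mathbb X(I)$ because $\mathbb X(I)$ is a vector space closed under $\phi\mapsto\sqrt{1+\phi^2}$.
\item Since $\sqrt{1+a^2}\ge\abs{a}$, we get $m(U_1,U_2)\le\tfrac12(U_1+U_2)-\tfrac12\abs{U_1-U_2}=\min(U_1,U_2)$.
\item Iterating, $h:=m(\cdots m(m(U_1,U_2),U_3)\cdots,U_q)$ satisfies $h\le\min_jU_j$. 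For $q=1$ we simply take $h=U_1$.
\end{itemize}

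\emph{Case $q=0$, $p\ge1$.} This is symmetric: we use $\tfrac12(L_1+L_2)+\sqrt{1+(\tfrac12(L_1-L_2))^2}\ge\max(L_1,L_2)$ and iterate.

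\emph{Small cases.} For $1\le p,q\le 2$ the claim is \eqref{eq:weak lattice} itself. When $p=1$ or $q=1$ we first duplicate the function, e.g.\ set $L_2:=L_1$, so that hypotheses hold trivially.

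\emph{Increasing $q$ with $p\le2$.} We induct on $q\ge3$. Suppose $L_1,L_2\le U_j$ for all $j\le q$.
\begin{itemize}
\item By the induction hypothesis there is $g\in\mathbb X(I)$ with $L_1,L_2\le g\le U_1,\dots,U_{q-1}$.
\item The hypothesis also gives $L_1,L_2\le U_q$, so \eqref{eq:weak lattice} applies to the lower bounds $L_1,L_2$ and the upper bounds $g,U_q$.
\item This yields $h$ with $L_1,L_2\le h\le\min(g,U_q)\le\min_jU_j$.
\end{itemize}

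\emph{Increasing $p$ with $q$ arbitrary.} We induct on $p\ge3$.
\begin{itemize}
\item By the induction hypothesis there is $k\in\mathbb X(I)$ with $\max_{i\le p-1}L_i\le k\le\min_jU_j$.
\item Since also $L_p\le\min_jU_j$, the case of two lower bounds $k,L_p$ and $q$ upper bounds, already established, gives $h$ with $\max(k,L_p)\le h\le\min_jU_j$.
\item Then $h\ge\max_iL_i$, as required.
\end{itemize}

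The only delicate point is the empty-side case. There the interpolation property alone gives nothing, and closure under $\phi\mapsto\sqrt{1+\phi^2}$ is needed to build a one-sided bound, via the elementary inequality $\sqrt{1+a^2}\ge\abs a$. Every other step is a bookkeeping reduction to the two-by-two property. Closure under $\phi\mapsto\phi^2$ is not needed here.
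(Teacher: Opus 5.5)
Your proof is correct and follows essentially the same route as the paper. The two-lower-bound case is built up in $q$ by applying the weak-lattice property to $(L_1,L_2;g,U_q)$, then $p$ is increased (a step the paper omits and you supply), and closure under $\phi\mapsto\sqrt{1+\phi^2}$ handles the empty side. The only cosmetic difference is that the paper takes the simpler one-sided bounds $h=\sum_i\sqrt{1+L_i^2}$ and $h=-\sum_j\sqrt{1+U_j^2}$ instead of your iterated smooth max and min.
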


\begin{proof}
    First, we treat the case $p,q > 0$.
    If $p = q = 1$, the conclusion is trivial. Assume, for the moment, that $p = 2$. We will induct on $q$. 
    
    If $q = 1$, we simply take $h = U_1$. Assume inductively that there exists $h_{r-1}\in \mathbb X(I)$ such that
    \begin{equation}
        \max_{i=1,2}L_i \leq h_{r-1}
        \leq
        \min_{1\leq j \leq r-1}U_j.
        \label{eq:hr-1}
    \end{equation}
    By the definition of $\mathbb X(I)$ applied to the quadruple $(L_1,L_2,h_{r-1}, U_r)$, there exists $h_r \in \mathbb X(I)$ such that
    \begin{equation*}
        L_1,L_2 \leq h_r \leq h_{r-1}, U_r.
    \end{equation*}
    Since $h_r \leq h_{r-1}$, we see from~\eqref{eq:hr-1} that
    \begin{equation*}
        L_1, L_2 \leq h_r \leq U_1, \cdots, U_r.
    \end{equation*}
    The induction on the number $p$ of lower bounds is similar, so we omit the details.

    If $q = 0$, we may define $h = \sum_i \sqrt{1+L_i^2}$. Since each $L_i^2, \sqrt{1+L_i^2}\in \mathbb X(I)$ by assumption~\eqref{eq:closure} and $\mathbb X(I)$ is an additive group, we see that $h \in \mathbb X(I)$ and $h\geq L_i$ for every $i$. If $p = 0$, we define $h = -\sum_j \sqrt{1+U_j^2} \in \mathbb X(I)$, which satisfies $h \leq U_j$ for every $j$. 

    This completes the proof.
\end{proof}

In~\cite{NagelRudin1974}, Nagel and Rudin showed that $C^m$ is a $\wl$ space if and only if $m \in \set{0,1,2}$. For higher-order smoothness, more rigidity is required to eliminate small-amplitude ososcillations

\begin{proposition}\label{prop:nash weak lattice}
    Let $I\subset \R$ be an open and semialgebraic interval. Then $\cN(I)$ is a $\wl$ space. 
\end{proposition}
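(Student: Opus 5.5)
We must check two properties for $\cN(I)$: the closure property~\eqref{eq:closure} and the weak-lattice property~\eqref{eq:weak lattice}. First, $\cN(I)$ is a vector space. Closure is immediate: if $\phi$ is Nash, then $\phi^2$ is Nash. Likewise $\sqrt{1+\phi^2}$ is Nash, being the composition of $\phi$ with the Nash function $s\mapsto\sqrt{1+s^2}$, which is real-analytic and semialgebraic on $\R$ since $1+s^2\ge 1>0$.

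The substance is~\eqref{eq:weak lattice}. Given Nash $L_1,L_2,U_1,U_2$ with $L_i\le U_j$, the natural candidate is a smoothed midpoint between $\max(L_1,L_2)$ and $\min(U_1,U_2)$. The nonsmooth max and min would be replaced by the Nash regularizations
\[
\tilde L=\tfrac12\Big(L_1+L_2+\sqrt{(L_1-L_2)^2+\delta^2}\Big)\ \ge \max(L_1,L_2),\qquad
\tilde U=\tfrac12\Big(U_1+U_2-\sqrt{(U_1-U_2)^2+\delta^2}\Big)\ \le \min(U_1,U_2).
\]
Here $\delta>0$ is a Nash function on $I$. The roots are Nash because their radicands are strictly positive Nash functions.

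The problem is that we need $\tilde L\le\tilde U$, and this fails where the gap $g:=\min_j U_j-\max_i L_i\ge 0$ is small or zero. So I would choose the weight $\delta$ adapted to $g$, and in particular make it vanish wherever the constraints touch. Here is where Nash rigidity enters. Each difference $U_j-L_i$ is a Nash function that is $\ge0$ on $I$. By semialgebraicity, each such difference is either identically zero on $I$ (as $I$ is connected and the function analytic) or has finitely many zeros, each of finite (even) order. I would separate cases as follows.

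If some $U_j-L_i\equiv 0$, say $U_1=L_1$, take $h=L_1$: then $L_2\le U_1=h$ and $h=L_1\le U_2$. So assume that no difference vanishes identically. Then I take $\delta:=\prod_{i,j}(U_j-L_i)$, possibly multiplied by a small positive Nash factor such as $\varepsilon(1+x^2)^{-k}$ to handle behavior near the ends of $I$. The key claim is that $\tilde U-\tilde L\ge0$. Using $\sqrt{a^2+\delta^2}\le |a|+\delta$, we get
\[
\tilde U-\tilde L\ \ge\ g-\delta .
\]
So it suffices that $\delta\le g$. Near a point where, say, $U_1-L_1$ attains the min-gap, we have $g=U_1-L_1$ there. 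Thus $\delta=g\cdot(\text{product of the other differences})$, and this is $\le g$ once the other factors are $\le 1$.

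To force the factors to be $\le1$, I would replace each difference $d$ by $d/\sqrt{1+d^2}$, which is Nash, nonnegative, has the same zeros, and is bounded by $\min(d,1)$. Then $\delta:=\prod_{i,j} d_{ij}/\sqrt{1+d_{ij}^2}\le \min_{i,j} d_{ij}=g$ holds globally. This is because each factor is $\le1$ and the factor corresponding to the minimizing pair is $\le d_{ij}=g$ pointwise. This bound is pointwise and needs no case analysis near zeros.

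Finally set $h:=\tfrac12(\tilde L+\tilde U)$, which is Nash and satisfies $\tilde L\le h\le\tilde U$, as required. The main obstacle I anticipate is confirming that $\sqrt{(L_1-L_2)^2+\delta^2}$ remains Nash where $\delta$ vanishes, since the radicand can then hit zero. The inequality only needs $\delta\ge0$, but smoothness needs a strictly positive radicand. To fix this, I would instead use the regularizations with the strictly positive weight
\[
\delta'=\delta/(1+\delta)+\text{(nothing)}.
\]
This still has zeros, so it does not solve the problem. If that fails, I would exploit that at a zero $x_0$ of $\delta$ coming from $U_1=L_1$ at $x_0$, the ordering forces $L_2\le L_1=U_1$ near $x_0$. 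I would then handle the finitely many such points by a Nash partition-of-unity-free argument, using the finite-order vanishing of the differences to choose $\delta=\eta\cdot g_{\text{alg}}$ with $\delta^2$ dominated by $(L_1-L_2)^2+$(a positive Nash term). This local verification at the touching points is the step I expect to be hardest.
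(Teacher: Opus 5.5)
\textbf{There is a genuine gap: the Nash regularity of $h$ at touching points is never established, and your choice of $\delta$ actually fails there.}

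Your closure check and your inequality bookkeeping are correct. You have $\tilde L\ge\max_iL_i$, $\tilde U\le\min_jU_j$ and $\tilde U-\tilde L\ge g-\delta$, and $\delta=\prod_{i,j}d_{ij}/\sqrt{1+d_{ij}^2}$ satisfies $0\le\delta\le g$ on $I$. The gap is the step you flag and leave open: showing that $h$ is Nash.

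Take a point $x_0$ with $\delta(x_0)=0$ and $L_1(x_0)=L_2(x_0)$, and set $p=\ord_{x_0}(L_1-L_2)$, $q=\ord_{x_0}\delta$. Then $(L_1-L_2)^2+\delta^2$ is $(x-x_0)^{2\min(p,q)}$ times an analytic function positive at $x_0$. Its square root is therefore $|x-x_0|^{\min(p,q)}$ times such a function, which is not smooth when $\min(p,q)$ is odd. Your product makes $q$ the \emph{sum} of the orders of all $d_{ij}$ vanishing at $x_0$, and this can exceed an odd $p$.

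A concrete failure: on $I=\R$ take $L_1=x^3$, $L_2=-x^3$, $U_1=U_2=x^2+x^4$.
\begin{itemize}
\item[(i)] Every $d_{ij}=x^2(1\mp x+x^2)$ vanishes to order $2$ at $0$, so $q=8$ while $p=3$.
\item[(ii)] Hence $\tilde L=\tfrac12\sqrt{4x^6+\delta^2}=|x|^3u(x)$ with $u$ analytic and $u(0)=1$, which is not $C^3$.
\item[(iii)] Since $\tilde U=x^2+x^4-\delta/2$ is Nash, your $h$ is not Nash.
\end{itemize}
Neither $\delta/(1+\delta)$ nor the unspecified domination condition you mention addresses this.

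The missing idea is to control the \emph{exact} vanishing order of $\delta$. It must vanish at each zero $x_0$ of $g$ to order exactly $\ord_{x_0}g=\max\{\ord_{x_0}d_{ij}: d_{ij}(x_0)=0\}$; note it is the maximum, since the minimum of the $d_{ij}$ is governed by the fastest-vanishing one. With this order, things work:
\begin{itemize}
\item[(i)] Suppose $L_1(x_0)=L_2(x_0)$ and $p$ is odd. From $d_{2j}=d_{1j}+(L_1-L_2)$ and the evenness of the orders of the nonnegative $d_{ij}$, each pair $d_{1j},d_{2j}$ vanishing at $x_0$ has a common even order $<p$.
\item[(ii)] Hence $p>\ord_{x_0}g=q$, so $\min(p,q)$ is even. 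The $U$-side is symmetric.
\end{itemize}
One choice with this property is the harmonic-type minimum $\delta=\bigl(\sum_{i,j}d_{ij}^{-1}\bigr)^{-1}$, extended across its singularities. However, proving that it is Nash is itself a removable-singularity argument.

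The paper avoids square roots altogether. It sets $f_1=U_2-L_1$, $f_2=U_1-L_2$, $g=U_1-L_1$, so that $0\le g\le f_1+f_2$, and takes the Nagel--Rudin piece $g_1=f_1g/(f_1+f_2)$. This is analytic because, at a zero of $f_1+f_2$ of (necessarily even) order $r$, both $f_1$ and $g$ vanish to order at least $r$. Then $0\le g_1\le f_1$ and $0\le g-g_1\le f_2$ give $L_1,L_2\le h:=L_1+g_1\le U_1,U_2$ directly.
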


\begin{proof}
Suppose $L_1,L_2, U_1, U_2 \in \cN(I)$ with $L_i \leq U_j$ for $1 \leq i,j \leq 2$. Set $f_1 := U_2 - L_1$, $f_2 = U_1 - L_2$, and $g:= U_1 - L_1$. Then $f_1, f_2, g \geq 0$ and
\begin{equation}
    0 \leq g \leq f_1 + f_2.
    \label{eq: g bounded by f1 + f2}
\end{equation}
We proceed as the Nagel-Rudin interpolation~\cite{NagelRudin1974} and define
\begin{equation*}
    g_i(x) := \begin{cases}
        \frac{f_i(x)g(x)}{f_1(x) + f_2(x)}
        &\text{ if $f_1(x) + f_2(x) > 0$}
        \\
        0 &\text{ otherwise}
    \end{cases}\, 
    \text{\quad for $i = 1,2$.}
\end{equation*}
Thanks to \eqref{eq: g bounded by f1 + f2}, we have
\begin{equation*}
    g_1 + g_2 = g
    \text{\quad and \quad}
    0 \leq g_i \leq f_i \text{\quad for $i = 1,2$.}
\end{equation*}
We claim that $g_1,g_2 \in \cN(I)$. It suffices to show that the formal singularities at the zeroes of $f_1 + f_2$ are removable.

Fix $x_0 \in I$ such that $f_1(x_0) + f_2(x_0) = 0$. If $f_1 + f_2$ vanishes identically on a neighborhood of $x_0$, then $f_1 \equiv f_2 \equiv g \equiv 0 $ in that neighborhood, and there is nothing to prove. For the rest of the proof, assume otherwise. Since $f_1 + f_2 \in \cN(I)$, hence real analytic on $I$, there exists an integer $r \geq 1$ and a real analytic function $a$ with $a(x_0)\neq 0$ such that
\begin{equation*}
    f_1(x) + f_2(x) = (x-x_0)^ra(x)
    \text{ \quad in a neighborhood of $x_0$.}
\end{equation*}
Since $f_1 + f_2 \geq 0$, the exponent $r$ must be even and $a(x_0) > 0$. The condition $0 \leq f_1, g \leq f_1 + f_2$ forces $\ord_{x_0}(f_1), \ord_{x_0}(g) \geq r$, and therefore,
\begin{equation*}
    \ord_{x_0}(f_1g) \geq 2r
    .
\end{equation*}
This means that $\frac{f_1g}{f_1+f_2}$ can be extended real analytically across $x_0$ and its extension vanishes at $x_0$. Moreover, this extension must agree with $g_1$. Therefore, $g_1$ is real analytic on $I$. The same argument applies to $g_2$.

We claim that $g_1,g_2 \in \cN(I)$. To see this, we note that
\begin{itemize}
    \item on the semialgebraic set $\set{f_1 + f_2} > 0$, they are quotients of Nash functions;
    \item on the semialgebraic set $\set{f_1 + f_2 = 0}$, they are identically zero.
\end{itemize}

For the last step, define $h:= L_1 + g_1 \in \cN(I)$. It is clear from construction that $L_1,L_2 \leq h \leq U_1,U_2$. This completes the proof.
\end{proof}

\subsection{Regular intervals}

We begin with an elementary result from~\cite[Corollary 2.2]{jiangluli2026semialgebraic}

\begin{proposition}\label{prop:Fourier Motzkin}
    After a finite semialgebraic partition of $\R$, the condition 
        \begin{equation*}
            K_f(x) = \set{v \in \R^M : A(x)v\leq f(x)} \neq \void
        \end{equation*}
        is equivalent on each cell to finitely many homogeneous linear inequalities in $f(x)$ with semialgebraic coefficients determined by $A$.
\end{proposition}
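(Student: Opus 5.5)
The approach is Fourier--Motzkin elimination carried out symbolically over the field of semialgebraic functions, followed by a refinement of the partition so that every coefficient that appears has constant sign on each cell. We eliminate the variables $v_M, v_{M-1}, \ldots, v_1$ one at a time. At the stage where $v_k$ is to be eliminated, the current system has the form $\sum_{j\le k} B_{\ell j}(x)v_j \le g_\ell(x)$, $\ell = 1,\dots,N_k$. Each $g_\ell$ is a linear combination $\sum_i c_{\ell i}(x) f_i(x)$ with semialgebraic coefficients $c_{\ell i}$ that depend only on $A$.

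To keep the procedure uniform in $x$, we first partition $\R$ into finitely many semialgebraic cells (points and open intervals) on which every $B_{\ell k}$ has constant sign $>0$, $=0$, or $<0$. Since each $B_{\ell k}$ is a univariate semialgebraic function, its zero set and sign sets are finite unions of points and intervals, so such a partition exists. On a fixed cell we sort the inequalities into three groups. Those with $B_{\ell k}>0$ give upper bounds
\[
v_k \le \Bigl(g_\ell - \sum_{j<k}B_{\ell j}v_j\Bigr)\Big/ B_{\ell k}.
\]
Those with $B_{\ell k}<0$ give the corresponding lower bounds. Those with $B_{\ell k}=0$ do not involve $v_k$ and are kept unchanged.

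We then use the elementary fact that a real number $v_k$ satisfying finitely many lower bounds $\lambda_a$ and upper bounds $\mu_b$ exists if and only if $\lambda_a\le \mu_b$ for all pairs $(a,b)$. If one of the two groups is empty, no pairwise condition arises. This replaces the system by a new one in $v_1,\dots,v_{k-1}$, with finitely many inequalities and coefficients that are again semialgebraic on the cell. The reason is that quotients and products of semialgebraic functions are semialgebraic wherever they are defined, and the divisors $B_{\ell k}$ do not vanish on the cell.

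The right-hand sides stay linear in $f(x)$: each new right-hand side has the form $g_\ell/B_{\ell k} - g_{\ell'}/B_{\ell' k}$, or the analogous expression, so it is a linear combination of the $f_i(x)$ with semialgebraic coefficients.

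After refining the partition at each of the $M$ elimination stages, we arrive at a system with no variables. This system says exactly that finitely many expressions $\sum_i c_{\nu i}(x) f_i(x)$ satisfy $0\le \sum_i c_{\nu i}(x) f_i(x)$. These are homogeneous linear inequalities in $f(x)$, which is the claim.

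Equivalence at each stage is exact and pointwise in $x$. At a fixed $x$ in the cell, the system in $(v_1,\dots,v_k)$ is solvable if and only if the eliminated system in $(v_1,\dots,v_{k-1})$ is solvable. This is the standard Fourier--Motzkin step, and it holds because $x$ is frozen and the signs of the coefficients are constant on the cell.

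The partitions depend only on the signs of coefficients built from $A$. The common refinement of finitely many partitions into points and intervals is again finite. Hence the final partition and all the coefficients depend only on $A$.

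The only point needing care is keeping the sign patterns constant on each cell. The coefficients created at later stages are new semialgebraic functions of $x$, so the partition must be refined again at every stage. This is harmless in one variable, since at each stage there are only finitely many functions, each with finitely many sign changes.
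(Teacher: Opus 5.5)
Your proposal is correct and follows essentially the route the paper relies on: the paper cites its companion work for this statement, and the same formal Fourier--Motzkin elimination reappears in the proof of Proposition~\ref{prop:solution on regular interval}. That elimination partitions so that the pivot coefficients have constant sign, sorts rows into upper-bound, lower-bound and $v_k$-free groups, and pairs each lower bound with each upper bound. One harmless presentational point: since the later coefficients are only defined cell by cell, the refinement is nested within the earlier cells rather than a common refinement of independent partitions, and finiteness is unaffected.
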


Proposition~\ref{prop:Fourier Motzkin} reduces the nonemptiness of $K_f(x)$ on each cell of a finite semialgebraic partition to finitely many homogeneous linear inequalities whose coefficients are semialgebraic functions constructed from $A$. However, to apply the Lemma~\ref{lem:weak lattice for arbitrary family} and Proposition~\ref{prop:nash weak lattice}, we need these coefficients to be Nash and of constant sign, so that no coefficient changes role or degenerates. Since only finitely many such coefficients occur, and each is semialgebraic, the set where this regularity fails is finite. Refining the partition accordingly leads to the following definition.

\begin{definition}\label{def:regular interval exception point}
    Let $\Sigma_A \subset \R$ be a finite set such that on every connected component of $\R\setminus \Sigma_A$, every coefficient arising in the formal Fourier-Motzkin elimination is smooth (hence Nash) and has a constant sign in $\set{+, -, 0}$. We call each connected component of $\R\setminus \Sigma_A$ a \textbf{regular intervals}; we call each point in $\Sigma_A$ an \textbf{exceptional point}.
\end{definition}

We now construct a Nash, and hence $C^m$, selection on each regular interval. The corresponding extension to suitable smooth $\wl$ spaces and definable classes is discussed in Remark~\ref{rem:general weak lattice selection}.

\begin{proposition}\label{prop:solution on regular interval}
    Let $I\subset \R$ be a regular interval in the sense of Definition \ref{def:regular interval exception point}. Let $f \in \cN(I,\R^M)$ and suppose 
    \begin{equation*}
        K_f(x) = \set{v \in \R^M : A(x)v\leq f(x)} \neq \void
        \text{\quad for all $x \in I$.}
    \end{equation*}
    Then there exists $F \in \cN(I,\R^M)$ such that $A(x)F(x)\leq f(x)$ for all $x \in I$. In particular, $K_f$ admits a $C^m$ selection over $I$. 
\end{proposition}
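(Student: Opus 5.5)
The plan is to run the Fourier--Motzkin elimination one variable at a time and to construct the components of $F$ in reverse order. At each stage we apply Lemma~\ref{lem:weak lattice for arbitrary family} with $\mathbb X(I)=\cN(I)$, which is allowed by Proposition~\ref{prop:nash weak lattice}. Throughout, we write $f\in\cN(I,\R^N)$, since $f$ is the forcing vector.

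Concretely, write the system as $\sum_j A_{ij}(x)v_j\leq f_i(x)$ and eliminate $v_1$ first. By Definition~\ref{def:regular interval exception point}, each coefficient $A_{i1}$ is Nash on $I$ with constant sign. This splits the rows into three sets $P$, $Z$, and $N_-$, according to whether $A_{i1}>0$, $A_{i1}\equiv0$, or $A_{i1}<0$ on $I$. For $i\in P$ and $k\in N_-$, the rows give an upper bound and a lower bound on $v_1$:
\[
U_i:=\frac{f_i-\sum_{j\geq2}A_{ij}v_j}{A_{i1}},\qquad L_k:=\frac{f_k-\sum_{j\geq2}A_{kj}v_j}{A_{k1}}.
\]
The eliminated system in $(v_2,\dots,v_M)$ consists of the rows in $Z$ together with the inequalities $L_k\leq U_i$. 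After clearing the positive denominators, this is again a linear system whose coefficients and right-hand sides are Nash. Its coefficients are exactly those produced by the formal elimination, so on $I$ they are Nash with constant sign. The right-hand sides are Nash combinations of $f$. We iterate until no variables remain, obtaining a chain of systems $S_1,\dots,S_M$. For each $x$, the standard Fourier--Motzkin theorem says that $S_k$ is feasible at $x$ in $(v_k,\dots,v_M)$ if and only if $S_{k+1}$ is feasible at $x$ in $(v_{k+1},\dots,v_M)$. Since every $K_f(x)$ is nonempty, every stage is pointwise feasible.

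We then construct $F$ backwards. Suppose $F_{k+1},\dots,F_M\in\cN(I)$ have been chosen so that $S_{k+1}$ holds pointwise on $I$. Substituting them into the bounds of $S_k$ for $v_k$ gives finitely many functions $L_\ell$ and $U_i$. Each is a quotient of Nash functions by a nonvanishing Nash coefficient, hence lies in $\cN(I)$. The constraints of $S_{k+1}$ say precisely that $\max_\ell L_\ell\leq\min_i U_i$ pointwise on $I$. Lemma~\ref{lem:weak lattice for arbitrary family}, including the cases where $p=0$ or $q=0$, then gives $F_k\in\cN(I)$ squeezed between these bounds. The rows of $S_k$ in which $v_k$ has zero coefficient are already constraints of $S_{k+1}$, so they hold. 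The base case is $S_{M+1}$, which involves no variables. It consists of the homogeneous-in-$f$ inequalities of Proposition~\ref{prop:Fourier Motzkin}, and these hold on $I$ by hypothesis. At the end, $F=(F_1,\dots,F_M)\in\cN(I,\R^M)$ satisfies $AF\leq f$, and $F$ is in particular $C^m$.

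The main obstacle is bookkeeping. At every stage the pivot coefficients must be nonvanishing Nash functions of constant sign, so that dividing by them keeps us inside $\cN(I)$ and preserves the inequality directions. This is exactly what Definition~\ref{def:regular interval exception point} guarantees, because the coefficients of every intermediate system are among the coefficients arising in the formal elimination. A coefficient that vanishes identically on $I$ is simply routed into the class $Z$. Because the sign classes are fixed across all of $I$, the selection problem at each stage is a uniform max/min problem rather than one whose structure changes from point to point. This uniformity is what makes the weak lattice lemma applicable.
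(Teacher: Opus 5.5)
Your proposal is correct and is essentially the paper's proof. Both use Fourier--Motzkin elimination with the constant-sign, nonvanishing pivots guaranteed on a regular interval, then back-substitute, producing each new component via Lemma~\ref{lem:weak lattice for arbitrary family} in $\cN(I)$ (Proposition~\ref{prop:nash weak lattice}). The differences are cosmetic: you unroll the induction on $M$ into an explicit chain of systems and eliminate $v_1$ first rather than the last coordinate. This also makes explicit a point the paper leaves implicit, namely that the intermediate systems stay regular on $I$ because their coefficients come from the formal elimination.
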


\begin{proof}
    We induct on the dimension $M$. The base case $M = 0$ is trivial. Suppose the conclusion holds for $1, \cdots, M-1$. Write $v = (v',s)$ with $v' \in \R^{M-1}$ and write the $i$-th inequality in $K_f(x)$ as
    \begin{equation*}
        a_i(x)s + c_i(x) \cdot v' \leq f_i(x).
    \end{equation*}
    Since $I$ is a regular interval, each coefficient $a_i$ has a constant sign (or identically zero) on $I$. Accordingly, define
    \begin{equation*}
        \mathcal I_- := \set{i : a_i < 0}
        \ , \
        \mathcal I_+ := \set{i : a_i > 0}
        \ , \text{ and } \ 
        \mathcal I_0 := \set{i : a_i \equiv 0}.
    \end{equation*}
    \begin{itemize}
        \item For $i \in \mathcal I_-$, we have the lower bound
        \begin{equation*}
            \eqindent
            s \geq \frac{f_i(x)-c_i(x)\cdot v'}{a_i(x)} =: L_i^\sharp(x,v').
        \end{equation*}
        \item For $j \in \mathcal I_+$, we have the upper bound
        \begin{equation*}
            \eqindent
            s \leq \frac{f_j(x)-c_j(x)\cdot v'}{a_j(x)} =: U_j^\sharp(x,v').
        \end{equation*}
        
        \item For $k \in \mathcal I_0$, the inequalities are independent of $s$ and is given by
        \begin{equation*}
            \eqindent
            c_k(x) \cdot v' \leq f_k(x).
        \end{equation*}
    \end{itemize}
    It follows that the Fourier-Motzkin projection of the original system $A(x)F(x)\leq f(x)$ onto the first $M-1$ variables is
    \begin{equation}
        \begin{cases}
            L_i^\sharp(x,v') \leq U_j^\sharp(x,v') &\text{ for all $i \in \mathcal I_-$ and $j \in \mathcal I_+$}
            \\
            c_k(x) \cdot v' \leq f_k(x) &\text{ for all $k \in \mathcal I_0$}
        \end{cases}\, . 
        \label{eq:projected system}
    \end{equation}
    Since the original system is pointwise feasible on $I$, the projected system~\eqref{eq:projected system} is also pointwise feasible on $I$. Moreover, the system~\eqref{eq:projected system} has Nash data by construction. 

    By the induction hypothesis, the system~\eqref{eq:projected system} admits a selection $\bar F \in \cN(I,\R^{M-1})$. Define the scalar functions
    \begin{equation}
        L_i(x):= L_i^\sharp(x,\bar F(x))
        \text{\quad and \quad}
        U_j(x):= U_j^\sharp(x,\bar F(x)).
        \label{eq:Li and Uj}
    \end{equation}
    Since $f\in \cN(I,\R^M)$, $\bar F \in \cN(I,\R^{M-1})$, and the fixed semialgebraic coefficients $a_i, c_i$ are Nash on the regular interval with constant signs, we see that
    \begin{equation}
        L_i, U_j \in \mathbb \cN(I)
        \text{\quad for $i \in \mathcal I_-$ and $j \in \mathcal I_+$.}
        \label{eq: Li Uj Cm}
    \end{equation}
    Since $\bar F$ satisfies the exact Fourier-Motzkin projection, \eqref{eq:projected system} and \eqref{eq:Li and Uj} implies
    \begin{equation}
        \max_{i \in \mathcal I_-}L_i \leq \min_{j\in \mathcal I_+}U_j.
        \label{eq:LiUj cross bound}
    \end{equation}
    In view of \eqref{eq: Li Uj Cm}--\eqref{eq:LiUj cross bound}, we can apply Lemma~\ref{lem:weak lattice for arbitrary family} to $L_i,U_j$ to find a function $F_M \in \cN(I)$ such that
    \begin{equation*}
        \max_{i \in \mathcal I_-}L_i \leq F_M \leq \min_{j\in \mathcal I_+}U_j,
    \end{equation*}
    with the understanding that if either $\mathcal I_-$ or $\mathcal I_+$ is empty, the constraint is removed. 
    Set $F:= (\bar F,F_M)$. It is clear that $F \in \cN(I,\R^M)$ Each $k$-th row for $k \in \mathcal I_0$ is satisfied, since $\bar F$ satisfies the projected system. Rows corresponding to $\mathcal I_\pm$ are satisfied by construction. Therefore, $A(x)F(x)\leq f(x)$ on $I$. This completes the proof.
\end{proof}

\begin{remark}\label{rem:general weak lattice selection} 

The Nash class in Proposition~\ref{prop:solution on regular interval} may be replaced by a more general class of smooth functions. More precisely, let $\mathbb X(I)\subset C^\infty(I)$ be a $\wl$ space in the sense of Definition~\ref{def:weak lattice} that is also a unital algebra and is closed under reciprocals of nowhere-vanishing elements, i.e.,  
\begin{equation*}
    g\in\mathbb X(I) \quad\text{and}\quad g(x)\neq0\ \text{for every }x\in I \quad\Longrightarrow\quad \frac1g\in\mathbb X(I).
\end{equation*}
Suppose, in addition, that all coefficient functions and the forcings arising in the formal Fourier-Motzkin elimination belong to $\mathbb X(I)$. Then the proof of Proposition~\ref{prop:solution on regular interval} applies verbatim. In particular, the algebra and reciprocal-closure assumptions ensure that $\frac{f_i-c_i\cdot\bar F}{a_i} \in \mathbb X(I)$ whenever the pivot coefficient $a_i$ is nowhere vanishing. 
The $\wl$ property, together with Lemma~\ref{lem:weak lattice for arbitrary family}, then provides the final component of the selection between the resulting finite families of lower and upper bounds. 

The same argument applies to any fixed definable class and definable matrix $A$, after partitioning into regular intervals on which the Fourier-Motzkin coefficients retain the required smoothness, sign, and algebraic closure properties.
\end{remark}

\section{Compatibility at exceptional points}
\label{sect:exceptional points}

\subsection{Distance to the feasible polygon}

Throughout the rest of this section, we fix an exceptional point $y \in \Sigma_A$ (recall Definition~\ref{def:regular interval exception point}). We characterize $m$-jets which can be realized by feasible one-sided $C^m$ selections.

As before, for each $x \in \R$,
\begin{equation*}
    K_f(x) = \set{v \in \R^M : A(x)v \leq f(x)}.
\end{equation*}
Suppose a regular interval lies immediately to either the right or the left of $y$. For $P\in\mathcal P^m(\mathbb R,\mathbb R^M)$, define
\begin{equation}
d^\pm_{y,P}(t)
=
\operatorname{dist}_\infty
\brac{P(y\pm t),K_f(y\pm t)}
\text{\quad for $t > 0$,}
\label{eq:distance-plus}
\end{equation}
where $\operatorname{dist}_\infty(v,K) := \inf\limits_{w \in K}\norm{v-w}_{\infty}$ is the distance function with respect to the $\ell_\infty$ norm on a vector space.

We collect two useful results from~\cite[Corollaries 4.2 and 4.3]{jiangluli2026semialgebraic}. 

\begin{proposition}\label{prop:dist to polygon}
The following are true.
\begin{enumerate}[(A)]

\item\label{prop:dist to polygon point} Let $A(t)$ be a matrix of elements in $\germ_0$. Then there exist constants $C>0$ and $\mathbb Q \ni D_{\mathrm{dist}}\geq0$, depending only on $A$, such that whenever $b\in \mathbb R^N$ and
\begin{equation*}
    K_b(t)
:=
\{u\in\mathbb R^M:A(t)u\leq b\} \neq \void
\end{equation*}
we have
\begin{equation*}
\eqindent
    \operatorname{dist}_\infty\brac{v,K_b(t)}\leq 
Ct^{-D_{\mathrm{dist}}} \norm{(A(t)v-b)_+}_{\infty}.
\end{equation*}
for all sufficiently small $t>0$ and all $v\in\R^M$.

\item\label{prop:dist to polygon boolean} There is an integer $s_y$ such that $d^\pm_{y,P}(t) = o(t^m)$ as $t \downarrow 0$ is equivalent to a finite Boolean combination of homogeneous linear sign conditions in $(P,\jet_y^{(s_y)}f)$. Moreover, for some rational $\eta > 0$ depending only on $m$ and the fixed coefficient germs, 
\begin{equation*}
    \eqindent
    d^\pm_{y,P}(t) = o(t^m)
    \quad\implies\quad
    d^\pm_{y,P}(t) = O(t^{m+\eta})
    \text{\quad as $t \downarrow 0$.}
\end{equation*}
\end{enumerate}
\end{proposition}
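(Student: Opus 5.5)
The plan for \eqref{prop:dist to polygon point} is to reduce to Hoffman's error bound for linear inequality systems, applied pointwise in $t$, and then to control the Hoffman constant as a semialgebraic germ. For a fixed matrix $B$ with $K_b=\set{u:Bu\leq b}\neq\void$, Hoffman's lemma gives
\[
\dist_\infty(v,K_b)\leq H(B)\,\norm{(Bv-b)_+}_\infty .
\]
One admissible constant is $H(B)=\max_J \norm{B_J^{+}}$. Here $J$ ranges over the row subsets for which the rows of $B_J$ are linearly independent, and $B_J^+$ is the Moore--Penrose pseudo-inverse, with the operator norm taken from $\ell_\infty$ to $\ell_\infty$.

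Apply this to $B=A(t)$. There are finitely many row subsets, and the entries of $A$ lie in $\germ_0$. Hence, for $t>0$ small, the family of independent subsets $J$ is eventually constant, by the eventual constancy of sign of the minors. Each $\norm{A_J(t)^+}$ is then a semialgebraic germ. By the Puiseux description in Section~\ref{sect:semialgebraic limit}, $H(A(t))\leq Ct^{-D_{\mathrm{dist}}}$ with $D_{\mathrm{dist}}\in\mathbb Q_{\geq0}$; we take $D_{\mathrm{dist}}=\max(0,-\min_J\ord_0\norm{A_J^+})$. This is uniform in $b$ and $v$, since $H$ depends on $A$ only.

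For \eqref{prop:dist to polygon boolean}, first replace $f$ by a Taylor polynomial. Write $f(y\pm t)=T_s(\pm t)+O(t^{s+1})$, where $T_s$ is the degree-$s$ Taylor polynomial built from $\jet_y^{(s)}f$. Hoffman's bound from \eqref{prop:dist to polygon point} also shows that $b\mapsto K_b(t)$ is Lipschitz in Hausdorff distance with constant $Ct^{-D_{\mathrm{dist}}}$. Therefore replacing $f$ by $T_s$ perturbs $d^\pm_{y,P}$ by $O(t^{s+1-D_{\mathrm{dist}}})$. We fix $s_y$ with $s_y+1-D_{\mathrm{dist}}>m+1$, so the truncation is invisible both at order $o(t^m)$ and at order $O(t^{m+\eta})$.

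Next, compute the truncated distance $d_T(t)$ by LP duality. It is the value of the linear program of minimizing $\epsilon$ subject to $A(t)w\leq T_s(\pm t)$ and $\norm{P(y\pm t)-w}_\infty\leq\epsilon$. Its dual feasible polyhedron depends only on $A(t)$. After discarding finitely many points near $t=0$, this polyhedron has finitely many vertices $\lambda_k(t)$, and these are semialgebraic germs. Hence
\[
d_T(t)=\max_k\brac{\ell_k(t)}_+ ,
\]
where each $\ell_k(t)$ is the pairing of $\lambda_k(t)$ with the affine data $(P(y\pm t),T_s(\pm t))$. In particular, each $\ell_k(t)$ is linear in the coefficient vector of $(P,\jet_y^{(s_y)}f)$, with semialgebraic germ coefficients in $t$.

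Expanding these coefficient germs as Puiseux series with a common denominator $q$ gives
\[
\ell_k(t)=\sum_{\gamma\in q^{-1}\mathbb Z,\ \gamma\geq -D'}c_{k,\gamma}(P,f)\,t^\gamma ,
\]
with each $c_{k,\gamma}$ a homogeneous linear form. Then $d_T=o(t^m)$ holds if and only if, for every $k$, the finite vector $(c_{k,\gamma})_{\gamma\leq m}$ is either zero or has its first nonzero entry negative. This is a finite Boolean combination of homogeneous linear sign conditions, as required. When it holds, each $(\ell_k)_+$ is $O(t^{m+1/q})$, because the next possible exponent after $m$ is at least $m+1/q$. So $\eta=\min(1/q,1)$ works, after also absorbing the Taylor error.

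The main obstacle I expect is the uniformity in the duality step. One must check that the dual vertex set really is a finite family of germs independent of $(P,f)$, and that a single denominator $q$ and a single lower bound on exponents serve all $k$ simultaneously. Only then does the lexicographic sign condition involve finitely many $\gamma$. I would handle this by a semialgebraic cell decomposition of the parameter $t$ near $0$, on which the combinatorial type of the dual polyhedron is constant.
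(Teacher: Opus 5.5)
\textbf{Verdict: part (A) is correct; part (B) has a genuine but repairable gap at the Taylor-truncation step.}

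The paper does not prove this proposition. It imports both parts from the companion paper (Corollaries 4.2 and 4.3 there), so your argument has to stand on its own. Part (A) does. Hoffman's bound with constant $\max_J\norm{B_J^+}$, over row sets with independent rows, is valid. The cleanest proof uses Euclidean norms: project $v$ to $w\in K_b$, write $v-w=B_J^\top y$ with $y\geq 0$ by Carath\'eodory, and use $\norm{v-w}_2^2=y^\top(B_Jv-b_J)\leq\norm{y}_2\norm{(Bv-b)_+}_2$. Passing to $\ell_\infty$ costs only a dimensional factor. Eventual constancy of the independent subsets, together with Puiseux, then gives $Ct^{-D_{\mathrm{dist}}}$ uniformly in $b$.

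\textbf{The gap in (B).} The Hausdorff--Lipschitz consequence of (A) only compares $K_b(t)$ and $K_{b'}(t)$ when \emph{both} are nonempty. Replacing $f$ by its Taylor polynomial can destroy feasibility, however large $s_y$ is. Take $M=1$, $N=2$, $y=0$, $A(t)=(-1,t)^\top$ (so $D_{\mathrm{dist}}=1$), and $f(x)=(-x^{s},x^{s+1})$ with $s=s_y$.
\begin{itemize}
\item For $t>0$ one has $K_f(t)=\set{t^{s}}$, so $d^+_{0,0}(t)=t^{s}=o(t^m)$; your choice of $s_y$ forces $s>m$.
\item But $T_s(t)=(-t^s,0)$, and $K_{T_s}(t)=\set{v: v\geq t^s,\ tv\leq 0}=\void$.
\end{itemize}
So $d_T\equiv+\infty$. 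Equivalently, the primal LP is infeasible, the dual is unbounded, and $\max_k(\ell_k)_+$ is not its value. The sign criterion you derive for $d_T$ is therefore not shown to characterize $d^\pm_{y,P}$.

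\textbf{Repair: truncate after dualizing, not before.} First state the standing hypothesis $K_f(y\pm t)\neq\void$ for small $t$ explicitly. Without it $d^\pm_{y,P}\equiv+\infty$, and the paper only invokes (B) under this hypothesis (Theorem~\ref{thm:one sided selection at exceptional points}, Proposition~\ref{prop:boolean exceptional points}).

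Under this hypothesis, apply LP duality to the true data. Since the dual polyhedron depends only on $A$, this gives $d^\pm_{y,P}(t)=\max_k(\ell_k(t))_+$, where $\ell_k$ pairs $\lambda_k(t)$ with $(P(y\pm t),f(y\pm t))$. Now substitute $f(y\pm t)=T_s(\pm t)+O(t^{s+1})$ inside each $\ell_k$. Because $\norm{\lambda_k(t)}=O(t^{-D'})$ for some rational $D'\geq 0$, and $\max$ and $(\cdot)_+$ are $1$-Lipschitz, the error is $O(t^{s+1-D'})$. Choosing $s_y+1-D'\geq m+1$ then suffices.

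Alternatively, use the relaxed datum $f^{[s]}:=T_s+Ct^{s+1}\mathbf 1_N$, as the paper does in the proof of Theorem~\ref{thm:one sided selection at exceptional points}. It keeps $\void\neq K_f\subset K_{f^{[s]}}$, and the $C$-term contributes only $O(t^{s+1-D'})$ to each $\ell_k$.

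With this change, the rest of your argument goes through:
\begin{itemize}
\item finitely many dual-vertex germs, independent of $(P,f)$;
\item a common Puiseux denominator $q$;
\item lexicographic sign conditions on the finitely many $c_{k,\gamma}$ with $\gamma\leq m$;
\item $\eta=\min(1/q,1)$.
\end{itemize}
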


\subsection{Weighted weak lattice of Nash functions}
\label{sect:weighted weak nash lattice}

Recall that $\germ_0$ denotes the ring of right-hand semialgebraic germs at $0$. After restricting to a sufficiently small punctured right-neighborhood $(0,\eps)$, every element of $\germ_0$ admits a Nash representative.

Let $\lambda \in \R$. We use $\Phi^{m}_\lambda$ to denote the class of germs represented by functions $u \in C^m((0,\eps))$ satisfying
\begin{equation}
    \abs{u^{(r)}(t)} \leq Ct^{\lambda-r}
    \text{\quad , \quad}
    0 \leq r \leq m,
\end{equation}
for some constant $C$ and for all sufficiently small $t > 0$. If $\lambda > m$, then every $u \in \Phi^{m}_\lambda$ extends to a $C^m$ function at $0$ with a zero $m$-jet. 

If $a \in \germ_0\setminus\set{0}$, we have
\begin{equation}
    a\cdot \Phi^{m}_\lambda \subset \Phi^{m}_{\lambda + \ord(a)}
    \text{\quad and \quad}
    a^{-1}\cdot \Phi^{m}_\lambda \subset \Phi^{m}_{\lambda - \ord(a)}.
    \label{eq:multiplication order}
\end{equation}
We define the weighted Nash class (of germs)
\begin{equation}
    \Phi^{m,\cN}_\lambda := \cN((0,\varepsilon))\cap\Phi^m_\lambda,
\end{equation}
where, as usual, the interval may be shrunk without changing the associated germ at $0^+$.

As a direct consequence of the Puiseux theorem (Section~\ref{sect:semialgebraic limit}), we have the following. 

\begin{lemma}\label{lem:Nash derivative growth} 
Let $u\in\cN((0,\varepsilon))$ and let $\lambda\in\R$. Suppose \[ |u(t)|\leq Ct^\lambda
\text{\quad for all sufficiently small $t>0$.}
\]  
Then for $0 \leq r \leq m$, 
\begin{equation*}
    |u^{(r)}(t)| \leq C_rt^{\lambda-r}
    \text{\quad for all sufficiently small $t>0$.}
\end{equation*}
In particular, $u\in\Phi^{m,\cN}_\lambda$.
\end{lemma}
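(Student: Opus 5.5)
The plan is to use the Puiseux expansion of the Nash germ $u$ at $0^+$ together with the fact that derivatives of a convergent Puiseux series can be computed termwise. Discard the trivial case $u\equiv 0$ near $0^+$, where every derivative vanishes. Otherwise $u\in\germ_0$ is nonzero, and Section~\ref{sect:semialgebraic limit} gives $\gamma=\ord_0(u)\in\mathbb Q$ and $c\neq 0$ with $u(t)=ct^\gamma+o(t^\gamma)$. The hypothesis $|u(t)|\leq Ct^\lambda$ forces $\gamma\geq\lambda$: if $\gamma<\lambda$, then $|u(t)|\geq \tfrac{|c|}{2}t^\gamma$, which would eventually exceed $Ct^\lambda$.

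The main step is to show that $\ord_0(u^{(r)})\geq \gamma-r$ for each $r\leq m$. I would use the identification of semialgebraic right germs with algebraic Puiseux series \cite[Section~3.3]{BasuPollackRoy2006}. There is an integer $q\geq1$ and a real analytic function $w$ near $0$ such that $u(t)=w(t^{1/q})$ for $t\in(0,\varepsilon)$, after shrinking $\varepsilon$; here one uses that an algebraic Puiseux series converges and that its sum is the germ. Hence
\[
u(t)=\sum_{k\geq q\gamma} b_k t^{k/q},
\]
with the series convergent for small $t$.

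Termwise differentiation on $(0,\varepsilon)$ is legitimate, since it is differentiation of a convergent power series in $s=t^{1/q}$ composed with $s=t^{1/q}$. It gives
\[
u^{(r)}(t)=\sum_{k\geq q\gamma} b_k\,(k/q)(k/q-1)\cdots(k/q-r+1)\,t^{k/q-r}=t^{\gamma-r}\,h_r(t^{1/q}),
\]
where $h_r$ is real analytic near $0$ and therefore bounded near $0$. Thus $|u^{(r)}(t)|\leq C_r t^{\gamma-r}\leq C_r t^{\lambda-r}$ for $t<\min(\varepsilon,1)$, using $\gamma\geq\lambda$.

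An alternative that avoids the series is to note that $u^{(r)}$ is again semialgebraic (a Nash function has Nash derivatives). One then proves $\ord(u')\geq \ord(u)-1$ directly. If $u'\sim c't^{\beta}$ with $\beta<\gamma-1$, integrating from $t$ to $t_0$ would give $|u(t)|\gtrsim t^{\beta+1}$ when $\beta\neq-1$, contradicting $\ord u=\gamma$. The borderline cases $\beta=-1$, giving a logarithm, and $\beta+1\leq 0$ with $\gamma$ also small, need separate care, and this is where the argument could go wrong. Iterating the inequality $r$ times gives the claim.

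The conclusion $u\in\Phi^{m,\cN}_\lambda$ is then immediate from the definition of $\Phi^m_\lambda$, because $u$ is Nash and the bounds hold for $0\leq r\leq m$. The main obstacle is justifying termwise differentiation, or equivalently the one-step order inequality $\ord(u')\geq\ord(u)-1$, in the borderline exponent cases. The convergent-Puiseux route handles all exponents uniformly, so I would take that as the primary argument.
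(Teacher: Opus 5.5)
Your proof is correct and is essentially the paper's argument: the paper records this lemma as a direct consequence of the Puiseux theorem, and your convergent Puiseux expansion with termwise differentiation, together with the observation $\ord_0(u)\geq\lambda$, supplies exactly the justification the paper leaves implicit. One small fix: since $\lambda$, and hence $\gamma$, may be negative, you should write $u(t)=t^{\gamma}\tilde w(t^{1/q})$ with $\tilde w$ real analytic near $0$ rather than $u(t)=w(t^{1/q})$; your displayed series with $k\geq q\gamma$ already uses this form.
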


Recall Lemma~\ref{lem:weak lattice for arbitrary family} and Proposition~\ref{prop:nash weak lattice}.

\begin{lemma}\label{lem:weighted Nash lattice} 
Let $\lambda\in \mathbb Q$, $p,q \in \mathbb N_{\geq 1}$, and let $L_1,\ldots,L_p,U_1,\ldots,U_q \in\Phi^{m,\cN}_\lambda$. Suppose
\begin{equation}
    L_i(t)\leq U_j(t)
    \text{\quad for all $i,j$ and all sufficiently small $t>0$.}
\label{eq:weighted LiUj}
\end{equation}
Then there exists $h\in\Phi^{m,\cN}_\lambda$ such that
\begin{equation*}
    L_i(t)\leq h(t)\leq U_j(t)
    \text{\quad for all $i,j$ and all sufficiently small $t>0$.}
\end{equation*}
\end{lemma}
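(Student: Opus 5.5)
We reduce the weighted lattice statement to the unweighted Nash lattice (Proposition~\ref{prop:nash weak lattice} together with Lemma~\ref{lem:weak lattice for arbitrary family}) and recover the weighted bounds afterwards. The key observation is that sandwiching already gives the zeroth-order bound. If $h$ is Nash on $(0,\eps)$ and $L_1\leq h\leq U_1$, then
\begin{equation*}
    |h(t)|\leq \max\brac{|L_1(t)|,|U_1(t)|}\leq Ct^\lambda
\end{equation*}
for small $t>0$, since $L_1,U_1\in\Phi^{m,\cN}_\lambda$. Lemma~\ref{lem:Nash derivative growth} then upgrades this to $|h^{(r)}(t)|\leq C_rt^{\lambda-r}$ for $0\leq r\leq m$, so $h\in\Phi^{m,\cN}_\lambda$. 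Consequently, the whole task is to produce some Nash $h$ on a small interval $(0,\eps)$ lying between the $L_i$ and the $U_j$.

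To produce $h$, first shrink $\eps$ so that every $L_i$ and $U_j$ has a Nash representative on $(0,\eps)$. Also shrink it so that the finitely many inequalities $L_i\leq U_j$ hold on all of $(0,\eps)$. This is possible because each difference $U_j-L_i$ is a semialgebraic germ, whose sign is eventually constant by the Puiseux description in Section~\ref{sect:semialgebraic limit}, and which is nonnegative for small $t$ by hypothesis~\eqref{eq:weighted LiUj}.

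Now $I=(0,\eps)$ is an open semialgebraic interval. Proposition~\ref{prop:nash weak lattice} says that $\cN(I)$ is a $\wl$ space, so Lemma~\ref{lem:weak lattice for arbitrary family}, applied with $p,q\geq1$, yields $h\in\cN(I)$ with $\max_i L_i\leq h\leq\min_j U_j$ on $I$. The first paragraph then places $h$ in $\Phi^{m,\cN}_\lambda$, which completes the argument.

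The main point requiring care is that the bound $|h|\leq\max(|L_1|,|U_1|)$ uses both a lower and an upper barrier; this is exactly why the lemma assumes $p,q\geq1$. We also use that Lemma~\ref{lem:Nash derivative growth} applies to $h$ because $h$ is Nash. No rationality of $\lambda$ is actually needed beyond what the lemma already states.
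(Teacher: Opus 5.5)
Your proof is correct and matches the paper's argument: shrink to a common interval $(0,\eps)$, apply Proposition~\ref{prop:nash weak lattice} with Lemma~\ref{lem:weak lattice for arbitrary family} to get a Nash sandwich $h$, bound $|h|\leq Ct^\lambda$ using one lower and one upper barrier, and conclude with Lemma~\ref{lem:Nash derivative growth}. The Puiseux sign argument you give for shrinking $\eps$ is harmless but unnecessary, since the hypothesis already supplies such an interval for each pair $(i,j)$, and you can take the minimum over the finitely many pairs.
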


\begin{proof} 
Let $\eps > 0$ be sufficiently small so that the system of inequalities~\eqref{eq:weighted LiUj} hold for all $t \in (0,\varepsilon)$. Since $L_1,\ldots,L_p,U_1,\ldots,U_q \in\cN((0,\varepsilon))$, Proposition~\ref{prop:nash weak lattice} and Lemma~\ref{lem:weak lattice for arbitrary family} yield a function $h\in\cN((0,\varepsilon))$ such that 
\begin{equation*}
    L_i\leq h\leq U_j
    \text{\quad for all $i$ and $j$.}
\end{equation*}
Fix $i_0\in\{1,\ldots,p\}$ and $j_0\in\{1,\ldots,q\}$. Since $L_{i_0}(t)\leq h(t)\leq U_{j_0}(t)$ and $L_{i_0},U_{j_0}\in\Phi^{m,\cN}_\lambda$, there exists a constant $C>0$ such that
\begin{equation*}
    \abs{L_{i_0}(t)}+\abs{U_{j_0}(t)} \leq Ct^\lambda
    \text{\quad for all sufficiently small $t > 0$.}
\end{equation*}
It follows that 
\begin{equation*}
    \abs{h(t)}\leq Ct^\lambda
    \text{ for all $t \in (0,\eps)$.}
\end{equation*}
Lemma~\ref{lem:Nash derivative growth} then implies $h\in\Phi^{m,\cN}_\lambda$. 
This completes the proof. 
\end{proof}

The main result of this section is the following ``weighted box selection'' lemma. Intuitively, the lemma says that nonemptiness of a Nash family of bounded polyhedra implies the existence of a Nash selection whose vanishing order is controlled uniformly. The proof inducts on the number $M$ of variables, using Fourier-Motzkin elimination to first project and then lift the system while preserving the weighted Nash class. The additional coordinate is then constructed using Lemma~\ref{lem:weighted Nash lattice}.

\begin{lemma}\label{lem:weighted nash box selection}
    Let $(B_{ij}(t))_{1\leq i \leq L, 1 \leq j \leq M}$ be a matrix of right-hand Nash germs in $\germ_{0}$. There exists a constant $D_{\mathrm{box}} \geq 0$ depending only on $B$ and $M$ such that the following holds. 

    Let $\lambda,\Lambda \in \mathbb Q$ with $\Lambda \geq \lambda$. Suppose $g_i \in \Phi^{m,\cN}_\lambda$ for each $i = 1, \cdots, L$. Assume that for every sufficiently small $t > 0$,
    \begin{equation*}
        K(t) := \set{v \in \R^M:
        B(t)v \leq g(t)
        \text{ and }
        \abs{v_j}\leq C_0t^\Lambda\text{ for $j = 1, \cdots, M$}
        } \neq \void.
    \end{equation*}
    Then, after shrinking the interval if necessary, there exists $v = (v_1, \cdots, v_M) \in C^m((0,\eps), \R^M)$ such that $v(t)\in K(t)$ for all sufficiently small $t$, and
    \begin{equation*}
        v_j \in \Phi^{m,\cN}_{\lambda - D_{\mathrm{box}}} 
        \text{\quad for $j = 1, \cdots, M$.}
    \end{equation*}
\end{lemma}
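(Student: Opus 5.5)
Induct on $M$, following the Fourier--Motzkin scheme of Proposition~\ref{prop:solution on regular interval}, with the box constraints $\pm v_j \leq C_0 t^\Lambda$ kept as explicit rows of the system. We treat them as part of an enlarged matrix $\tilde B$ with rows $B$, $\pm e_j$, and forcing $\tilde g = (g, C_0t^\Lambda,\dots)$. Since $\Lambda\geq\lambda$, each $C_0 t^\Lambda$ lies in $\Phi^{m,\cN}_\lambda$ (on a small interval, $t^\Lambda \leq t^\lambda$, and Lemma~\ref{lem:Nash derivative growth} applies). So every entry of $\tilde g$ lies in $\Phi^{m,\cN}_\lambda$.

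First shrink $\eps$ so that every nonzero coefficient arising in the formal Fourier--Motzkin elimination of $\tilde B$ is Nash with constant sign on $(0,\eps)$. There are finitely many such coefficients, all germs in $\germ_0$, so this is possible. Each has a well-defined order $\ord(\cdot)\in\mathbb Q$. Let $D_1$ be the maximum of $|\ord|$ over all nonzero pivots and coefficients appearing at any stage. Writing $v=(v',s)$, the rows with $a_i<0$, $a_i>0$, $a_i\equiv 0$ give $L_i^\sharp$, $U_j^\sharp$ and $s$-free rows exactly as before. The projected system is $(a_j c_i - a_i c_j)\cdot v' \le a_j f_i - a_i f_j$, after clearing the positive denominator $-a_ia_j$. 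Its forcing lies in $\Phi^{m,\cN}_{\lambda-D'}$ for a bounded loss $D'$ by \eqref{eq:multiplication order}; positive orders only help, so normalizing by a worst-case order still places it in a weighted class. It retains the box rows for $v'$, and it is pointwise nonempty because $K(t)\ne\void$ projects to it.

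The induction hypothesis then gives a Nash $\bar F$ with components in $\Phi^{m,\cN}_{\lambda-D'-D_{\mathrm{box}}(M-1)}$. The lower and upper bounds $L_i(t)=L_i^\sharp(t,\bar F(t))$ and $U_j(t)$ are Nash, and by \eqref{eq:multiplication order} they lie in a common class $\Phi^{m,\cN}_{\lambda'}$ with $\lambda'=\lambda-D_{\mathrm{box}}(M-1)-D''$. Here $D''$ depends only on the orders of the coefficients. Both families are nonempty because the box rows $\pm s\le C_0t^\Lambda$ are present, so $p,q\ge1$. The cross inequalities hold because $\bar F$ solves the exact projection. Lemma~\ref{lem:weighted Nash lattice} now yields $F_M\in\Phi^{m,\cN}_{\lambda'}$ between them. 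Setting $v=(\bar F,F_M)$ satisfies all rows, including the box rows, and $D_{\mathrm{box}}(M)$ is defined recursively from the accumulated order losses. These depend only on $B$ and $M$, not on $\lambda$, $\Lambda$, $C_0$ or $g$.

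The main obstacle is bookkeeping the weights uniformly. The classes $\Phi^{m,\cN}_\mu$ are nested decreasingly in $\mu$, so elements of different weights must be placed in a common (worse) class. The order shifts in \eqref{eq:multiplication order} must also be bounded independently of $g$; this holds because they come only from the fixed elimination coefficients of $\tilde B$. Including the box rows from the start is what guarantees $p,q\ge1$ at every stage, as Lemma~\ref{lem:weighted Nash lattice} requires, and is the reason the hypothesis $\Lambda\ge\lambda$ appears.
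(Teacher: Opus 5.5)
Your proposal is correct and follows essentially the same route as the paper. Both arguments augment the system with the box rows, induct on $M$ via Fourier--Motzkin, and use Lemma~\ref{lem:weighted Nash lattice} for the last coordinate, with the box rows guaranteeing nonempty lower and upper families. The only difference is bookkeeping: the paper first multiplies each row by $t^{\delta_i}$ so that every coefficient has nonnegative order, which keeps the projected forcing in $\Phi^{m,\cN}_\lambda$ and confines the loss to the pivot orders $\kappa$, whereas you carry the losses from negative-order coefficients through the recursion. Both ways work.
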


\begin{proof}

    Since each $B_{ij}\in \germ_0$, each nonzero entry has a finite rational order, which may be negative. We first normalize the rows of the system. For each $i = 1, \cdots, L$, choose $\delta_i \in \mathbb Q_{\geq 0}$ such that
    \begin{equation*}
        \delta_i + \operatorname{ord}_{0}(B_{ij}) \geq 0
        \text{\quad for all $j$ such that $B_{ij}\not\equiv 0$}.
    \end{equation*}
    Multiplying the $i$-th row by the function $t\mapsto t^{\delta_i}$ (which is positive on $(0,\eps)$) preserves the solution. Moreover, by~\eqref{eq:multiplication order},
    \begin{equation*}
        t^{\delta_i}g_i \in \Phi^{m,\cN}_{\lambda+\delta_i} \subset \Phi^{m,\cN}_{\lambda}.
    \end{equation*}
    For the rest of the proof, we may then assume that 
    \begin{equation*}
        \operatorname{ord}_0(B_{ij}) \geq 0 
        \text{\quad for every $B_{ij}\not\equiv 0$.}
    \end{equation*}

    We then augment the system to incorporate the box constraints $\norm{v}_\infty \leq C_0t^\Lambda$. Define the block-matrix functions
    \begin{equation*}
        \widetilde B(t):= (B(t), I_M, -I_M)
        \text{\quad and \quad}
        \widetilde g(t) := (g(t), C_0t^\Lambda \mathbf 1_N, C_0t^\Lambda \mathbf 1_N)
    \end{equation*}
    so that
    \begin{equation}
        K(t) = \set{v \in \R^M : \widetilde B(t)v \leq \widetilde g(t)}.
        \label{eq:augmented system}
    \end{equation}

    Since $\Lambda \in \mathbb Q$, the function $t\mapsto t^\Lambda$ is Nash on a sufficiently small interval $(0,\eps_\Lambda)$. Since $\lambda \leq \Lambda$, we have $t^\Lambda \in \Phi^{m,\cN}_\lambda$. Therefore,
    \begin{equation*}
        \widetilde g_\alpha \in \Phi^{m,\cN}_\lambda \text{\quad for every row $\alpha$.}
    \end{equation*}

    We will prove the lemma by induction on $M$. 
    
    The case $M = 0$ is trivial. 
    
    In each induction step, we assume -- after shrinking the interval once if necessary -- that every nonzero coefficient from the Fourier-Motzkin eliminations has constant sign. Assume the lemma holds for $M-1$. 

    Write $v = (v', s) \in \R^{M-1}\times \R$ and write each row of the augmented system~\eqref{eq:augmented system} as
    \begin{equation*}
        a_\alpha(t) s + c_\alpha(t) \cdot w \leq r_\alpha(t)
        \text{\quad with \quad} r_\alpha \in \Phi^{m,\cN}_\lambda.
    \end{equation*}
    Note that each $a_\alpha$ has constant sign after shrinking the interval, so we may partition the indices as follows.
    \begin{equation*}
        \mathcal I_- := \set{\alpha: a_\alpha < 0},\ 
        \mathcal I_+ := \set{\alpha: a_\alpha > 0},\
        \mathcal I_0 := \set{\alpha:a_\alpha \equiv 0}.
    \end{equation*}
    Note that $\mathcal I_-$ and $\mathcal I_+$ are nonempty, since the boundary of the box $-C_0t^\Lambda \leq s \leq C_0t^\Lambda$ poses an active constraint. 

    We proceed as in the proof of Proposition~\ref{prop:solution on regular interval}.
    \begin{itemize}
        \item For $\alpha \in \mathcal I_-$, define 
        \begin{equation*}
            \eqindent
            L_\alpha^\sharp(t,v') := \frac{r_\alpha(t) - c_\alpha(t) \cdot v'}{a_\alpha(t)}.
        \end{equation*}
        \item For $\beta \in \mathcal I_+$, define
        \begin{equation*}
            \eqindent
            U_\beta^\sharp(t,v') := \frac{r_\beta(t)-c_\beta(t)\cdot v'}{a_\beta(t)}.
        \end{equation*}
        \item For $\gamma \in \mathcal I_0$, we have a row that is independent of $s$:
        \begin{equation*}
            \eqindent
            c_\gamma(t)\cdot v' \leq r_\gamma(t).
        \end{equation*}
    \end{itemize}
    Therefore, the Fourier-Motzkin projection of the system~\eqref{eq:augmented system} is given by
    \begin{equation}
        \begin{cases}
            L_\alpha^\sharp(t,v') \leq U_\beta^\sharp(t,v')
            \Leftrightarrow
            (a_\beta c_\alpha - a_\alpha c_\beta)(t)\cdot v' \leq (a_\beta r_\alpha - a_\alpha r_\beta)(t)
            &\text{ $\alpha \in \mathcal I_-$, $\beta \in \mathcal I_+$}
            \\
            c_\gamma(t)\cdot v' \leq r_\gamma(t)
            &\text{ $\gamma \in \mathcal I_0$}
        \end{cases} \ .
        \label{eq:augmented system projected}
    \end{equation}
    In the first set of inequalities in~\eqref{eq:augmented system projected}, all the coefficients are Nash germs of nonnegative order, so~\eqref{eq:multiplication order} implies $a_\beta r_\alpha - a_\alpha r_\beta \in \Phi^{m,\cN}_\lambda$. 

    By the induction hypothesis, the system~\eqref{eq:augmented system projected} admits a solution
    \begin{equation}
        v' = (v'_1, \cdots, v'_{M-1}) \in \bigoplus_{j=1}^{M-1} \Phi^{m,\cN}_{\lambda - D_{M-1}}
        \label{eq:inducted solution v prime}
    \end{equation}
    where $D_{M-1}$ depends only on the projected coefficients. 
    
    Define the univariate functions
    \begin{equation*}
        L_\alpha(t):= L_\alpha^\sharp(t,v'(t))
        \text{\quad and \quad}
        U_\beta(t) := U_\beta^\sharp(t,v'(t))
        \text{\quad for $\alpha \in \mathcal I_-$ and $\beta \in \mathcal I_+$.}
    \end{equation*}
    We analyze the orders of these functions.
    \begin{itemize}
        \item Since $D_{M-1} \geq 0$, we have $\Phi^{m,\cN}_\lambda \subset \Phi^{m,\cN}_{\lambda - D_{M-1}}$. 
        \item Multiplication by $c_\alpha$ preserves the class $\Phi^{m,\cN}_{\lambda - D_{M-1}}$, so $r_\alpha - c_\alpha \cdot v' \in \Phi^{m,\cN}_{\lambda - D_{M-1}}$. 
        \item Set
    \begin{equation*}
        \kappa := \max\brac{
        \set{\operatorname{ord}_0
        a_\alpha : \alpha \in \mathcal I_- \cup \mathcal I_+}
        \cup \set{0} }.
    \end{equation*}
    Then division by $a_\alpha$ lowers the order by at most $\kappa$.
    \end{itemize}
    In short, we have
    \begin{equation*}
        L_\alpha, U_\beta \in \Phi^{m,\cN}_{\lambda- D_{M-1} - \kappa}
        \text{\quad for all $\alpha \in \mathcal I_-$ and $\beta \in \mathcal I_+$.}
    \end{equation*}
    Since $v'$ from~\eqref{eq:inducted solution v prime} satisfies the Fourier-Motzkin projection, we have
    \begin{equation*}
        L_\alpha(t) \leq U_\beta(t)
        \text{\quad for all $\alpha \in \mathcal I_-$ and $\beta \in \mathcal I_+$.}
    \end{equation*}
    By Lemma~\ref{lem:weighted Nash lattice}, there exists
    \begin{equation}
        s \in \Phi^{m,\cN}_{\lambda- D_{M-1} - \kappa}
        \label{eq:s in weighted class}
    \end{equation}
    such that
    \begin{equation*}
        L_\alpha(t) \leq s(t) \leq U_\beta(t)
        \text{\quad for all $\alpha \in \mathcal I_-$ and $\beta \in \mathcal I_+$.}
    \end{equation*}
    
    To finish the proof, we simply define $v(t) := (v'(t),s(t))$. By construction, $v$ satisfies the augmented system~\eqref{eq:augmented system}, so $v(t)\in K(t)$ for all sufficiently small $t$. By~\eqref{eq:inducted solution v prime} and~\eqref{eq:s in weighted class}, every component of $v$ belongs to $\Phi^{m,\cN}_{\lambda - D_{\mathrm{box}}}$ with $D_{\mathrm{box}}:= D_{M-1} + \kappa$ and $\kappa$ depends only on $\widetilde B$ and $M$.

\end{proof}

\subsection{Flatness at an exceptional point}
\label{sect:extension at exceptional point}

The main goal of this section is captured in the following self-contained theorem, which characterizes the $m$-jets that arise from feasible one-sided $C^m$ selections at an exceptional point. 
The necessity follows directly from Taylor's theorem. 
For sufficiency, the key ingredients and main steps are as follows.
\begin{itemize}
    \item We first provide a quantitative upgrade for the distance condition from Proposition~\ref{prop:dist to polygon}(\ref{prop:dist to polygon point}) using Puiseux asymptotics.
    
    \item The semialgebraic curve selection lemma then provides an approximate feasible selection with the prescribed jet.

    \item The remaining constraint error is divided into asymptotically inactive and active rows, and the weighted box-selection lemma, ultimately based on the weighted Nash sandwich property, supplies a sufficiently flat correction for the active subsystem. Adding this correction produces the desired feasible $C^m$ selection without changing its $m$-jet.
\end{itemize}

The result is also constructive. See Remark~\ref{rem:curve-selection-constructive} after the proof.

\begin{theorem}\label{thm:one sided selection at exceptional points}
Fix $m \in \mathbb N_0$, $M,N \in \mathbb N_{> 0}$, and let $(A_{ij})_{1\leq i\leq N,\ 1\leq j\leq M}$ be a matrix of  univariate semialgebraic functions. Let $f=(f_1,\ldots,f_N) \in \cN(\mathbb R,\mathbb R^N)$. For each $x\in\mathbb R$, set
\begin{equation*}
    K_f(x)
:=
\set{v\in\mathbb R^M:A(x)v\leq f(x)}.
\end{equation*}
Let $\Sigma_A$ be the finite exceptional set associated with $A$ in the sense of Definition~\ref{def:regular interval exception point}, and fix $y\in\Sigma_A$.
Assume that a regular interval lies immediately to the right of $y$, and that for some $\eps>0$, $K_f(y+t)\neq \void$ for all $0 < t < \eps$. Let $P\in\mathcal P^m(\R,\R^M)$ Then the following are equivalent.
\begin{enumerate}[(A)]
\item There exist $\eps'\in \mathbb Q_{>0}$ and $F \in C^m([y,y+\eps'), \R^M)$
such that $F$ restricted to $(y,y+\eps')$ is Nash, $\jet_y^mF = P$, and $A(x)F(x)\leq f(x)$ for all $y \leq x < y+\eps'$.

\item One has $P(y)\in K_f(y)$ and 
\begin{equation*}
    d^+_{y,P}(t)
:=
\operatorname{dist}_\infty
\bigl(
P(y+t),K_f(y+t)
\bigr)
=
o(t^m)
\text{\quad as $t \downarrow 0$}
\end{equation*}
\end{enumerate}

The analogous statement holds from the left.
\end{theorem}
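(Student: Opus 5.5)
For the necessity (A)$\Rightarrow$(B), I would use Taylor's theorem directly. If $F\in C^m([y,y+\eps'))$ has $\jet_y^mF=P$, then $\norm{F(y+t)-P(y+t)}_\infty=o(t^m)$. Since $F(y+t)\in K_f(y+t)$, this gives $d^+_{y,P}(t)\leq \norm{F(y+t)-P(y+t)}_\infty = o(t^m)$. Evaluating at $x=y$ gives $P(y)=F(y)\in K_f(y)$.

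For sufficiency (B)$\Rightarrow$(A), I will look for $F=P+w$, where $w$ is Nash on $(y,y+\eps')$ and lies in $\Phi^{m,\cN}_{\mu}$ for some $\mu>m$. Such a $w$ extends $C^m$ to $y$ with zero $m$-jet, so $\jet^m_yF=P$ and $F(y)=P(y)\in K_f(y)$. Writing $t=x-y$, the constraint becomes
\begin{equation*}
A(y+t)w(t)\leq g(t):=f(y+t)-A(y+t)P(y+t).
\end{equation*}

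\emph{Step 1: quantitative flatness.} By Proposition~\ref{prop:dist to polygon}(\ref{prop:dist to polygon boolean}), $d^+_{y,P}(t)=O(t^{m+\eta})$ for some rational $\eta>0$.

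\emph{Step 2: approximate selection.} The set $\set{(t,u): u\in K_f(y+t),\ \norm{u-P(y+t)}_\infty\leq 2d^+_{y,P}(t)+t^{m+\eta}}$ is semialgebraic, because $f$ is Nash and $A$ is semialgebraic. Its fibres are nonempty for small $t>0$. Semialgebraic curve selection then yields a semialgebraic germ $w_0(t)=u(t)-P(y+t)$ with $A(y+t)(P+w_0)\leq f$ and $\abs{w_0(t)}\leq Ct^{m+\eta}$. After shrinking, $w_0$ is Nash on $(0,\eps)$.

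The difficulty is that Lemma~\ref{lem:Nash derivative growth} gives $w_0\in\Phi^{m,\cN}_{m+\eta}$ only for the components. It gives no control if we want more flatness, and the point is that $w_0$ itself already works. So I will argue that $w_0\in\Phi^{m,\cN}_{m+\eta}$ by Lemma~\ref{lem:Nash derivative growth}, which makes $F=P+w_0$ admissible immediately.

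\emph{Step 3: fallback if more flatness is needed.} If I do not trust Step 2 (for instance because the selection must simultaneously control all the rows), the fallback uses Lemma~\ref{lem:weighted nash box selection}. Take $B(t)=A(y+t)$ restricted to the rows that are asymptotically active, meaning $g_i=O(t^{\lambda})$ with $\lambda$ large. The remaining inactive rows have $g_i\geq ct^{\gamma_i}$, and a sufficiently flat $w$ cannot violate them.

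I would first fix a large target $\lambda$. Then I would truncate each $g_i$: set $\tilde g_i=\min(g_i,t^{\lambda})$-type Nash modifications, chosen to keep feasibility. Using $w_0$ scaled into a box $\abs{v_j}\leq C_0t^{\Lambda}$ with $\Lambda = m+\eta$, Lemma~\ref{lem:weighted nash box selection} produces $v\in \Phi^{m,\cN}_{\lambda-D_{\mathrm{box}}}$. Choosing $\lambda>m+D_{\mathrm{box}}$ then gives the zero $m$-jet.

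\emph{Main obstacle.} I expect the hardest step to be reconciling the growth order of the right-hand side $g$, which may be of order below $m$ because $P$ alone need not be feasible, with the hypothesis $g_i\in\Phi_\lambda$ of the box lemma. My first attempt would be to split the rows into those where $g_i$ has order $>m+D_{\mathrm{box}}+\eta$ (active) and the rest. On the rest, $g_i$ is bounded below by a positive power, or the row is slack. The box lemma is then applied only to the active subsystem, with the inactive rows ensured by the smallness of the correction.
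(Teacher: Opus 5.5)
Your necessity argument is the paper's. Your Steps~1--2 already give a complete proof of (B)$\Rightarrow$(A), by a genuinely shorter route than the paper's. Because $f$ is Nash, hence semialgebraic, you apply definable choice to the \emph{exact} fibre $K_f(y+t)$, cut down to the box of radius $2d^+_{y,P}(t)+t^{m+\eta}$ about $P(y+t)$. Feasibility is then built in. The jet condition follows from $\abs{w_0(t)}=O(t^{m+\eta})$ and Lemma~\ref{lem:Nash derivative growth}, since $m+\eta>m$, and $F(y)=P(y)\in K_f(y)$ handles the endpoint. (For fixed $f$ and $P$ you could even get $\eta$ from Puiseux applied to the semialgebraic germ $d^+_{y,P}$, without the uniformity in Proposition~\ref{prop:dist to polygon}(B).)

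The paper instead runs curve selection against the relaxed polynomial forcing $f^{[L]}\geq f$ built from the $L$-jet of $f$, so its selection $P+R_0$ is feasible only for $f^{[L]}$. It then needs three further tools: the Hoffman-type bound of Proposition~\ref{prop:dist to polygon}(A) to get a pointwise correction of size $t^{L+1-D_{\mathrm{dist}}}$, a split into asymptotically inactive and active rows, and the weighted box-selection Lemma~\ref{lem:weighted nash box selection} to make that correction Nash and $m$-flat. What the detour buys is that the set handed to curve selection depends on $f$ only through a finite jet, in the spirit of Remark~\ref{rem:curve-selection-constructive}. What your route buys is that Proposition~\ref{prop:dist to polygon}(A) and Lemmas~\ref{lem:weighted Nash lattice} and~\ref{lem:weighted nash box selection} are not needed for this theorem at all.

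You should delete Step~3 rather than keep it as a hedge, because as sketched it would not work. First, rows that $P$ itself violates have $g_i=f_i-A_iP<0$, possibly of order below your activity threshold. So the claim that every inactive row satisfies $g_i\geq ct^{\gamma_i}$ fails in general, and those rows cannot be ignored. Second, Lemma~\ref{lem:weighted nash box selection} requires $\Lambda\geq\lambda$. A box of radius $t^{m+\eta}$ together with a target $\lambda>m+D_{\mathrm{box}}$ violates this unless $\eta>D_{\mathrm{box}}$. The paper avoids both problems in two moves. It first selects $R_0$, so that $g_1=f^{[L]}-A(P+R_0)\geq0$ and only the defect $g_2=f-f^{[L]}=O(t^{L+1})$ remains. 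It then takes $L$ large, so that the box radius $t^{L+1-D_{\mathrm{dist}}}$ and the active right-hand sides are as flat as needed.
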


\begin{proof}
    We will prove the theorem for when the regular interval is on the right of $y$. The proof of the left-side is almost identical. 

    The implication (A) $\implies$ (B) is a direct consequence of Taylor's theorem for $C^m$. We will show that (B) $\implies$ (A). Without loss of generality, we may assume that $y = 0$.

    By Proposition~\ref{prop:dist to polygon}(\ref{prop:dist to polygon boolean}), there exists some rational number $\eta > 0$ such that
    \begin{equation}
        d_{0,P}^+(t) = O(t^{m+\eta})
        \text{ for sufficiently small $t > 0$.}
        \label{eq:d0P}
    \end{equation}
    By Taylor's theorem, for a sufficiently large integer $L$, we have
    \begin{equation}
        \abs{f_i(t) - \jet_{0}^{(L)}f_i(t)} \leq C_L t^{L+1}
        \text{\quad for all $i$ and sufficiently small $t > 0$.}
        \label{eq:taylor L}
    \end{equation}
    Fix the constant $C_L$ and define a relaxed Taylor data
    \begin{equation*}
        f^{[L]}(t):= \jet_0^{(L)}f(t) + C_L t^{L+1}\mathbf{1}_N.
    \end{equation*}
    It follows from \eqref{eq:taylor L} that $f(t)\leq f^{[L]}(t)$ coordinate-wise for small $t$ and $K_f(t) \subset K_{f^{[L]}}(t)$. By~\eqref{eq:d0P}, we have
    \begin{equation*}
        \dist_\infty(P(t), K_{f^{[L]}}(t)) \leq C_0t^{m+\eta}.
    \end{equation*}
    Fix $C_0$ as above and consider the semialgebraic set
    \begin{equation*}
        \mathcal R_0 := \set{(t,R)\in \R\times \R^M: 0 < t < \eps,\ A(t)[P(t) + R] \leq f^{[L]}(t), \ 
        \norm{R}_\infty \leq 2C_0t^{m+\eta}
        }
    \end{equation*}
    and we write $\mathcal R_0(t) = \set{R\in \R^M: (t,R) \in \mathbb R_0}$. After shrinking $\eps > 0$ if necessary, we may assume that every fiber $\mathcal R_0(t)$ is nonempty. By the semialgebraic curve selection lemma, there exists a semialgebraic function $R_0(t)$ such that
    \begin{equation}
        (t,R_0(t)) \in \mathcal R_0
        \text{\quad for all $t \in (0,\eps)$.}
        \label{eq:R0 curve selection}
    \end{equation}
    Shrinking $\eps > 0$ again if necessary, we may assume $R_0$ is smooth in $t$ and $\norm{R_0(t)}_\infty = O(t^{m+\eta})$. By Puiseux' theorem, every component of $R_0$ has a Puiseux order of at least $m+\eta$. Therefore,
    \begin{equation*}
        R_0^{(r)}(t) = O(t^{m+\eta - r})
        \text{\quad for all $0\leq r \leq m$ and $t \downarrow 0$.}
    \end{equation*}
    Therefore, $R_0$ extends to a $C^m$ function at $0$ with a zero $m$-jet. 

    Next, we define two functions that measure the solution gaps at $t$:
    \begin{equation*}
        \begin{split}
            g_1(t) &:= f^{[L]}(t) - A(t)[P(t) + R_0(t)],
            \\
            g_2(t) &:= f(t) - f^{[L]}(t).
        \end{split}
    \end{equation*}
    Note that $g_1(t)$ is component-wise nonnegative and $g_2(t)$ is component-wise nonpositive. We claim that it suffices to find an error function $E(t)$ satisfying
    \begin{equation}
        A(t)E(t) \leq g_1(t) + g_2(t)
        \text{\quad and \quad}
         \jet_0^{(m)}E\equiv 0.
        \label{eq:error Et}
    \end{equation}
    Indeed, once we establish~\eqref{eq:error Et}, we may set $R(t) := R_0(t) + E(t)$, and $F(t):= P(t) + R(t)$ is the desired solution. It remains to show~\eqref{eq:error Et}.

    We begin with the first part of~\eqref{eq:error Et}. From \eqref{eq:taylor L}, we see that 
    \begin{equation}
        \text{$g_2 \in \Phi^m_{L+1}$ \quad and \quad  $ \abs{g_{2}(t)} \leq C\abs{t}^{L+1}$.}
        \label{eq:g2 taylor}
    \end{equation}
    Therefore, 
    \begin{equation*}
        \norm{\brac{A(t)[P(t)+R_0(t)]
        - f(t)
        }_+} \leq Ct^{L+1}.
    \end{equation*}
    By Proposition~\ref{prop:dist to polygon}(\ref{prop:dist to polygon point}), for each sufficiently small $t$, there exists a point $v^*(t)\in K_f(t)$ such that
    \begin{equation*}
        \norm{v^*(t) - [P(t)+R_0(t)]}_\infty \leq C't^{L+1-D_{\mathrm{dist}}}
    \end{equation*}
    where $C'$ and $D$ are constants depending only on $A$. Using $v^*$, we define
    \begin{equation}
        E^*(t) := v^*(t) - [P(t)+R_0(t)].
        \label{eq:Estar}
    \end{equation}
    It follows from construction that $A(t)E^*(t) \leq g_1(t) + g_2(t)$, with bound
    \begin{equation}
        \norm{E^*(t)}_\infty \leq C't^\Lambda
        \text{\quad where $\Lambda = L+1-D_{\mathrm{dist}} \in \mathbb Q$.}
        \label{eq:Estar bounded by Lambda}
    \end{equation}
    Thus, for every sufficiently small $t>0$, 
    \LAQ{eq:system feasible}{the system 
    \begin{equation*}
        A(t)E\leq g_1(t)+g_2(t) \text{\quad with constraint \quad} \norm{E}_\infty\leq C't^\Lambda,
    \end{equation*}
    is nonempty, with $E^*(t)$ providing a pointwise feasible element.}
    Note that there is no regularity assumption on $E^*(t)$ for the moment. We will use Lemma~\ref{lem:weighted nash box selection} to obtain a $C^m$ choice of such elements.

    To this end, set
    \begin{equation*}
        \alpha_i := \min_{j : A_{ij}\not\equiv 0}\ord_0(A_{ij}).
    \end{equation*}
    Note that we only define $\alpha_i$ when the entire row is not identically zero. If the $i$-th row of $A$ is identically zero, then the corresponding inequality is independent of $E$. Recall from~\eqref{eq:system feasible} that pointwise feasible element $E^*(t)$ satisfies the full system, we have 
    \begin{equation*}
        0\leq g_{1,i}(t)+g_{2,i}(t)
    \end{equation*}
    for all sufficiently small $t>0$. Therefore, the identically zero rows impose no condition on the choice of $E$ and we omit them from the arguments below.
    
    For the nonzero rows, also set
    \begin{equation*}
        \beta_i := \begin{cases}
            \ord_0(g_{1,i}) &\text{ if $g_{1,i}\not\equiv 0$}
            \\
            +\infty &\text{ otherwise}
        \end{cases}\ .
    \end{equation*}
    Choose an integer $L_0$ such that \begin{equation} 
    L_0> \max\brac{\{D_{\mathrm{dist}}-1-\alpha_i: A_i\not\equiv0\} \cup\{D_{\mathrm{dist}}-1\}}. 
    \label{eq:L0 def} 
    \end{equation}
    For each row $i$, we will separately treat the cases
    \begin{equation*}
        \beta_i < L - L_0
        \text{\quad and \quad}
        \beta_i \geq L - L_0.
    \end{equation*}
    
    \begin{itemize}
        \item Suppose $\beta_i < L-L_0$. We will show that these rows pose no asymptotic constraints. 
        Since $g_{1,i} \geq 0$, Puiseux' theorem states that the local behavior of $g_{1,i}$ is governed by $c_it^{\beta_i}$ for some $c_i > 0$. Thanks to~\eqref{eq:L0 def}, we have
        \begin{equation*}
        \eqindent
            A_i(t)E = o(t^{L-L_0})
            \text{ whenever }
            \norm{E}_\infty \leq C't^\Lambda.
        \end{equation*}
        Thanks to~\eqref{eq:g2 taylor}, we have
        \begin{equation*}
        \eqindent
            g_{2,i}(t) = O(t^{L+1}) = o(t^{L-L_0}) = o(t^{\beta_i}),
        \end{equation*}
        where we use the assumption that $\beta_i < L-L_0$ in the last asymptotic identity. Combining all of the above, we see that
        \begin{equation*}
        \eqindent
            \abs{A_i(t)E} + \abs{g_{2,i}(t)} = o(g_{1,i}(t))
            \text{\quad for $\norm{E}_\infty \leq C't^\Lambda$ with $t > 0$ sufficiently small.}
        \end{equation*}
        In particular, \eqref{eq:error Et} is automatic for these rows.

        \item Suppose $\beta_i \geq L-L_0$. If $g_{1,i}\not\equiv 0$, then Puiseux' theorem gives $g_{1,i}\in \Phi^m_{\beta_i} \subset \Phi^m_{L-L_0}$. If $g_{1,i}\equiv 0$, then $g_{1,i}\in\Phi^m_{L-L_0}$ trivially. Since $A$, $f$, $f^{[L]}$, and $R_0$ are Nash on the punctured regular interval $(0,\eps)$, the functions $g_1$ and $g_2$ are Nash there. Together with~\eqref{eq:g2 taylor}, this gives \[ g_{1,i}+g_{2,i} \in\Phi^{m,\cN}_{L-L_0} \] for every remaining row.
        
        Consider the subsystem formed by these remaining rows coupled with the constraint $\norm{E}_\infty \leq C't^\Lambda$. By~\eqref{eq:system feasible}, the subsystem is also pointwise feasible. By Lemma~\ref{lem:weighted nash box selection} applied to this subsystem with $\lambda = L - L_0$, there exists a $C^m$ solution $E(t)$ to the subsystem such that
        \begin{equation*}
            E_j \in \Phi^{m,\cN}_{L-L_0 - D}
        \end{equation*}
        for some $D$ depending only on $(A_{ij}),M,m$. 
    \end{itemize}
    
    Choose $L$ so large that $L-L_0 - D > m$, so that every component of $E$ extends to a $C^m$ function with $\jet_0^{(m)}E \equiv 0$. Since the case $\beta_i < L-L_0$ imposes no asymptotic constraint, we see that $E$ satisfies the full system~\eqref{eq:error Et}.

    This concludes the proof of Theorem~\ref{thm:one sided selection at exceptional points}. 
    
\end{proof}

\begin{remark}\label{rem:curve-selection-constructive}
    The proof of Theorem~\ref{thm:one sided selection at exceptional points} is constructive in the semialgebraic sense. In particular, the (parametrized univariate representation together with a Thom encoding of the) auxiliary curve $R_0$ from~\eqref{eq:R0 curve selection} may be computed using an effective semialgebraic curve-selection algorithm; see, e.g.~\cite{basu2022curve-select}.  
\end{remark}

Finally, we formulate the condition in Theorem~\ref{thm:one sided selection at exceptional points} in Boolean. 

\begin{proposition}\label{prop:boolean exceptional points}
    The following are true about exceptional points.
    \begin{enumerate}[(A)]
    \item For every $y\in\Sigma_A$ and each side $\eps \in \set{-,+}$, there is an integer $s_y$ and a finite Boolean formula $\mathcal B^\eps_y\brac{P,\jet_y^{(s_y)}f}$ built from homogeneous linear sign conditions such that, assuming pointwise feasibility on the corresponding punctured regular interval, $P\in\mathcal P^m(\mathbb R,\mathbb R^M)$ is realized by a feasible one-sided $C^m$ selection if and only if $\mathcal B^\eps_y\brac{P,\jet_y^{(s_y)}f}$ holds.
    

    \item For every $y \in \Sigma_A$, the condition
    \begin{equation}
        \eqindent
\brac{\exists P\in\mathcal P^m(\mathbb R,\mathbb R^M)}
\,
\left[
A(y)P(y)\leq f(y)
\ \wedge\
\mathcal B^-_y
\bigl(
P,\jet_y^{(s_y)}f
\bigr)
\ \wedge\
\mathcal B^+_y
\bigl(
P,\jet_y^{(s_y)}f
\bigr)
\right]
\label{eq:end point feasibility exceptional point}
    \end{equation}
    is equivalent to a finite Boolean combination of homogeneous linear sign conditions on a finite jet $\jet_y^{(s_y)}$, with coefficients purely determined by $(A_{ij}),M,N$, and $m$.

\end{enumerate}
\end{proposition}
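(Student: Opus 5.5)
Part (A) follows by combining Theorem~\ref{thm:one sided selection at exceptional points} with Proposition~\ref{prop:dist to polygon}(\ref{prop:dist to polygon boolean}). Fix $y\in\Sigma_A$ and a side, say $+$, and assume $K_f(y+t)\neq\void$ for small $t>0$. By Theorem~\ref{thm:one sided selection at exceptional points}, $P$ is realized by a feasible one-sided $C^m$ selection (Nash on the open side) if and only if $A(y)P(y)\leq f(y)$ and $d^+_{y,P}(t)=o(t^m)$. The first condition is a finite conjunction of homogeneous linear sign conditions in $(P(y),f(y))$, since $A(y)$ is a fixed real matrix. The second condition is, by Proposition~\ref{prop:dist to polygon}(\ref{prop:dist to polygon boolean}), a finite Boolean combination of homogeneous linear sign conditions in $(P,\jet_y^{(s_y)}f)$. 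So $\mathcal B^+_y$ is the conjunction of these two. The left side is handled identically. We enlarge $s_y$ to the maximum over both sides and over the order needed for $f(y)$.

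One point needs care. Theorem~\ref{thm:one sided selection at exceptional points} produces selections that are Nash on the punctured side, while (A) of the proposition asks only for some feasible one-sided $C^m$ selection. The direction ``Boolean condition $\Rightarrow$ selection'' is immediate from the theorem. For the converse, any feasible $C^m$ selection with jet $P$ satisfies $d^+_{y,P}(t)\leq \norm{P(y+t)-F(y+t)}_\infty=o(t^m)$ by Taylor's theorem. This is exactly the (A)$\Rightarrow$(B) argument of the theorem, which does not use the Nash property. Hence the two notions coincide.

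For part (B), observe that the bracketed formula in \eqref{eq:end point feasibility exceptional point} is a first-order formula in the coefficients of $P$ (finitely many real variables, namely $M(m+1)$ of them) and the jet $\jet_y^{(s_y)}f$. Each atom in it is a homogeneous linear sign condition in the joint variable $(P,\jet_y^{(s_y)}f)$, with real coefficients determined by $A$, $M$, $N$, $m$. We then eliminate the existential quantifier over $P$. For this we use quantifier elimination for linear formulas over ordered fields, i.e.\ Fourier--Motzkin elimination for Boolean combinations of strict and nonstrict linear inequalities and equations.

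The elimination goes as follows. First write the formula in disjunctive normal form. The existential quantifier distributes over disjunctions. Each disjunct is a conjunction of conditions $\ell(P,J)\ \Omega\ 0$ with $\Omega\in\set{<,=,>}$. Linear Fourier--Motzkin elimination, applied one coordinate of $P$ at a time, removes these variables. Equalities are used for substitution, and strict and nonstrict inequalities are paired as in the usual Fourier--Motzkin procedure, with strictness tracked. This yields a finite conjunction of linear sign conditions in $J=\jet_y^{(s_y)}f$ alone. Homogeneity is preserved because every combination formed is a positive linear combination of homogeneous forms. The coefficients remain fixed reals depending only on the data.

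The main obstacle is modest. It is bookkeeping: one must check that the mixed strict/nonstrict elimination still produces homogeneous atoms, and that $s_y$ and the formula are independent of $f$. Both follow because at no stage do the coefficients depend on $f$.
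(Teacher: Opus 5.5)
Your proposal is correct and follows the paper's proof: (A) comes from combining Theorem~\ref{thm:one sided selection at exceptional points} with Proposition~\ref{prop:dist to polygon}(\ref{prop:dist to polygon boolean}), and (B) comes from putting the formula in disjunctive normal form and eliminating the coefficients of $P$ one coordinate at a time by Fourier--Motzkin, with strictness tracked. You also note that necessity uses only Taylor's theorem, so arbitrary feasible one-sided $C^m$ selections realize the same jets as the Nash-on-the-open-side selections of the theorem; this is a correct clarification that the paper leaves implicit.
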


\begin{proof}
    We begin with (A).
    By Theorem~\ref{thm:one sided selection at exceptional points}, the one-sided admissibility is equivalent to 
    \begin{equation*}
        A(y)P(y) \leq f(y)
        \text{\quad and \quad}
        d_{y,P}^\pm(t) = o(t^m).
    \end{equation*}
    The first condition is a conjunction of homogeneous linear inequalities in $P$ and $f(y)$, and the second is a combination of finite Boolean-linear sign conditions in $(P,\jet_y^{(s_y)}f)$ by Proposition~\ref{prop:dist to polygon}(\ref{prop:dist to polygon boolean}).

    Now we turn to (B). Note that \eqref{eq:end point feasibility exceptional point} has the form $(\exists w \in W) \ \mathcal B(u,w)$ where $U\ni u, W$ are Euclidean. Put $\mathcal B$ in disjunctive normal form and eliminate $w$ coordinate-wise. Each step of the elimination produces a finite Boolean combination of homogeneous linear sign conditions.
\end{proof}

\section{Proof of Theorem~\ref{thm:main}}
\label{sect:proof of main theorem}

We begin by constructing the formulas in the statement of~Theorem \ref{thm:main}.

Recall $\Sigma_A$ from Definition~\ref{def:regular interval exception point}.
    Refine $\Sigma_A$ if necessary, by the finitely  many zero-dimensional cells occurring in the Fourier-Motzkin description of pointwise feasibility. Let $\mathcal C$ be the finite semialgebraic partition of $\R$ consisting of regular intervals and exceptional points.

    On each regular interval $I \subset \R \setminus \Sigma_A$, Proposition~\ref{prop:Fourier Motzkin} produces a finite conjunction $\mathcal F_I(f(x))$ of homogeneous linear inequalities with semialgebraic coefficients on $I$ depending only on $(A_{ij})$,
    such that 
    \begin{equation}
        \text{$K_f(x) = \set{v\in \R^M: A(x)v \leq f(x)} \neq \void$ if and only if $\mathcal F_I(f(x))$.}
        \label{eq:Kf and FI}
    \end{equation}

    For each exceptional point $y \in \Sigma_A$, Proposition~\ref{prop:boolean exceptional points} yields a finite Boolean formula $\mathcal C_y(\jet_y^{(s_y)}f)$ in linear homogeneous sign conditions, which is equivalent to the feasibility of a jet at $y$. In view of this, we define
    \begin{equation*}
        \bar m := \max\set{m \cup \set{s_y : y \in \Sigma_A}}.
    \end{equation*}
    Since $\Sigma_A$ is finite, the maximum is well defined and finite. Adding zeros if necessary, all formulae can be regarded as sign formulas in $\jet_y^{\bar m}f$.

    Finally, we set
    \begin{equation*}
        \mathcal S_I = \begin{cases}
            \mathcal F_I &\text{ if $I\subset \R\setminus \Sigma_A$ is a regular interval}
            \\
            \mathcal C_y &\text{ if $I = \set{y} \subset \Sigma_A$ is an exceptional point}
        \end{cases} \ .
    \end{equation*}

\begin{proof}[Proof of necessity]

Suppose $F\in C^m(\R,\R^M)$ satisfies $A(x)F(x)\leq f(x)$ for every $x \in \R$. 

\begin{itemize}
    \item If $x \in I$ where $I\subset \R\setminus \Sigma_A$ is a regular interval, then $F(x) \in K_f(x)$ and $K_f(x) \neq \void$. By~\eqref{eq:Kf and FI}, $\mathcal F_I(f(x))$ is true.

    \item Suppose $y \in \Sigma_A$ is an exceptional point. Set $P_y := \jet_y^{(m)} F \in \mathcal P(\R,\R^M)$. The restriction of $F$ to either side of $y$ is a feasible one-sided $C^m$ selection with the same jet $P_y$. By Proposition~\ref{prop:boolean exceptional points}(A), $P_y$ satisfies both one-sided endpoints formulae. Moreover, $P_y(y) = F(y)\in K_f(y)$, and by Proposition~\ref{prop:boolean exceptional points}(B), we see that $\mathcal C_y(\jet_y^{(s_y)}f)$ holds. 
\end{itemize}
This proves necessity.     
\end{proof}

\begin{proof}[Proof of sufficiency]
    Assume that all of the cell-wise conditions hold. 
    \begin{itemize}
        \item On every regular interval $I\subset \R\setminus \Sigma_A$, the condition $\mathcal F_I(f(x))$ holds for all $x \in I$. By~\eqref{eq:Kf and FI}, $K_f(x)\neq \void$ for every $x \in I$. By Proposition~\ref{prop:solution on regular interval}, there exists $G_I \in \cN(I,\R^M)\subset C^m(I,\R^M)$ such that $G_I(x) \in K_f(x)$ for all $x \in I$.

        \item For every exceptional point $y \in \Sigma_A$, the formula $\mathcal C_y(\jet_y^{(s_y)}f)$ holds. By Proposition~\ref{prop:boolean exceptional points}(B), there exists an $m$-jet $P_y \in \mathcal P^m(\R,\R^M) \in \cN(\R,\R^M)$ realizing~\eqref{eq:end point feasibility exceptional point} and $P_y(y) \in K_f(y)$. By Theorem~\ref{thm:one sided selection at exceptional points}, there exist $\eps_y > 0$ and feasible one-sided $C^m$ selection $F_y^-$ and $F_y^+$ defined on $(y - \eps_y, y]$ and $[y, y+\eps_y)$, respectively, such that
        \begin{equation*}
            \jet_y^{(m)} F_y^- = P_y = \jet_y^{(m)}F_y^+.
        \end{equation*}
    \end{itemize}
    It remains to patch the two types of solutions together.
    
    Let $I = (a,b)$ be a regular interval. We assume that $a,b \neq \pm \infty$. The case that either one is infinite can be handled similarly. Fix $F_a^+$ and $F_b^-$ as above. Shrinking $\eps_a$ or $\eps_b$ if necessary, we assume that $[a,a+\eps_a) \cap (b-\eps_b,b] = \void$.

    Let $0 \leq \chi_a,\chi_b \leq 1$ be $C^m$ semialgebraic cutoff functions such that $\chi_\circ \equiv 1$ near $\circ \in \set{a,b}$. Modify their supports if necessary, we may assume that $\chi_a + \chi_b \leq 1$. Define $\chi_I := 1 - \chi_a - \chi_b$ and define
    \begin{equation*}
        F_I := \chi_aF_a^+ + \chi_I G_I + \chi_bF_b^-.
    \end{equation*}
    It is clear that $F_I \in C^m(I,\R^M)$ and semialgebraic
    and $F_I(x)\in K_f(x)$ for all $x \in I$ by convexity. Moreover, $F_I \equiv F_a^+$ near $a$ and $F_I \equiv F_b^-$ near $b$.

    Finally, define the global solution
    \begin{equation*}
        F(y) := \begin{cases}
            F_I(y) &\text{ if $y \in I$ and $I\subset \R\setminus \Sigma_A$ is a regular interval}
            \\
            P_y(y) &\text{ if $y \in \Sigma_A$.}
        \end{cases} \ .
    \end{equation*}
    At exceptional points, we have $\jet_y^{(m)}F_y^- = P_y = \jet_y^{(m)}F_y^+$. It follows that $F \in C^m(\R,\R^M)$. 
    Since each component is semialgebraic, $F$ is semialgebraic.
    It is also clear from construction that $F(y) \in K_f(y)$ for all $y \in \R$,  This concludes the proof.
\end{proof}

\section{Semimodules of admissible forcing vectors}
\label{sect:generators}

Theorem~\ref{thm:main} gives a finite differential sign description of the admissible forcings. We now ask for a complementary description by algebraic generators.

For the rest of this section, we fix $m \in \mathbb N_0$ and let $I\subset \R$ be an open and semialgebraic interval.
Let $\ring(I)$ denote a fixed ring of scalar functions on~$I$, such as $\R[x]$ restricted to $I$ or $\cN(I)$. If $I = \R$, we simply write $\ring$ or $\cN$.
Let $\sa^m(X,Y)$ denote the class of semialgebraic $C^m$ maps from a semialgebraic set $X$ to a semialgebraic set $Y$. 

We define the cone of admissible forcings by 
\begin{equation} 
\cone_A^m(\ring(I)) := \set{ 
f\in\ring(I)^N: \text{there exists } 
F\in\sa^m(I,\R^M) 
\text{ such that } 
AF\leq f 
\text{ on $I$}
}. 
\label{eq:semimodule main} 
\end{equation} 
By Theorem~\ref{thm:main}, the same cone with $I = \R$ is obtained if $F\in\sa^m(\R,\R^M)$ in~\eqref{eq:semimodule main} is replaced by $F\in C^m(\R,\R^M)$. 

Let $I$ be a regular interval in the sense of Definition~\ref{def:regular interval exception point}. The inequality $AF\leq f$ on $I$ is equivalent to the existence of a component-wise nonnegative semialgebraic slack variable $\sigma$ satisfying 
\begin{equation} 
AF+\sigma=f 
\text{\quad with } \sigma \in\sa^m(I,\R_+^N), 
\label{eq:slack}
\end{equation} 
where $\R_+^N:=[0,\infty)^N$.
Consequently, 
\begin{equation} 
\cone_A^m(\ring) = \left[ 
A\cdot\sa^m(I,\R^M) + \sa^m(I,\R_+^N) 
\right] 
\cap\ring(I)^N. 
\label{eq:slack representation} 
\end{equation} 
Define the semiring 
 \begin{equation*} 
 \ring_+(I) := \set{ 
 q\in\ring(I): 
 q\geq0\text{ on } I 
 }. 
 \end{equation*} 
 The cone $\cone_A^m(\ring(I))$ is closed under addition and satisfies 
 \begin{equation*} 
 qf\in\cone_A^m(\ring(I)) 
 \qquad \text{for every } q\in\ring_+(I) \text{ and } f\in\cone_A^m(\ring(I)), 
 \end{equation*} 
 since $A(qF) = qAF \leq qf$ whenever $f \in \cone_A^m(\R(I))$. Therefore, $\cone_A^m(\ring(I))$ has the structure of a semimodule over the semiring $\ring_+(I)$.

 In addition, we define the lineality space of $\cone_A^m(\ring(I))$ by 
 \begin{equation} 
 \lin_A^m(\ring(I))
 := 
 \cone_A^m(\ring(I)) \cap \brac{-\cone_A^m(\ring(I))}. \label{eq:lineality space} 
 \end{equation} 
Note that $\lin_A^m(\ring(I))$ is an $\ring(I)$-module.

We use $\cone_A^m(\ring(I))\big/\lin_A^m(\ring(I))$ to denote the image of $\cone_A^m(\ring(I))$ under the map
\begin{equation*}
    \ring(I)^N \to \ring(I)^N \Big / \lin_A^m(\ring(I)).
\end{equation*}
Note that $\cone_A^m(\ring(I))\big/\lin_A^m(\ring(I))$ is a semimodule over the semiring $\ring_+(I)$.

\subsection{Finite generations or lack thereof}

 \begin{proposition}\label{prop:lineality module} 
 Let $\ring(I)$ be $\R[x]$ restricted to $I$ or $\cN(I)$. Then
  $\lin_A^m(\ring(I))$ is an $\ring(I)$-submodule of $\ring(I)^N$. Consequently, $\lin_A^m(\ring(I))$ is finitely generated as an $\ring(I)$-module. 
\end{proposition}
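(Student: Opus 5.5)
The plan is to verify the submodule axioms directly from the semimodule structure of $\cone_A^m(\ring(I))$, and then to deduce finite generation from Noetherianity of $\ring(I)$.

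First, $0\in\lin_A^m(\ring(I))$, since $F\equiv0$ gives $A\cdot 0\le 0$. The set $\lin_A^m(\ring(I))$ is closed under addition because $\cone_A^m(\ring(I))$ is: if $AF\le f$ and $AG\le g$, then $A(F+G)\le f+g$, and $F+G\in\sa^m(I,\R^M)$. It is closed under negation by the symmetry of its definition $\cone\cap(-\cone)$.

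The only point that needs an idea is closure under multiplication by an \emph{arbitrary}, possibly sign-changing, $q\in\ring(I)$. The semimodule structure already gives closure under multiplication by $\ring_+(I)$; note that $qF\in\sa^m(I,\R^M)$ because $q$ is a $C^\infty$ semialgebraic function on $I$. So I will decompose $q$ as a difference of two nonnegative elements of the same ring, using the polarization identity
\[
q=q_1-q_2,\qquad q_1:=\tfrac14(q+1)^2,\quad q_2:=\tfrac14(q-1)^2 .
\]
Both $q_1$ and $q_2$ lie in $\ring_+(I)$; this works for $\R[x]|_I$ as well as for $\cN(I)$, since each is a ring containing the constants. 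Now let $f\in\lin_A^m(\ring(I))$, so that $\pm f\in\cone_A^m(\ring(I))$. Then
\[
qf=q_1f+q_2(-f)\in\cone_A^m(\ring(I)),\qquad -qf=q_1(-f)+q_2f\in\cone_A^m(\ring(I)).
\]
Hence $qf\in\lin_A^m(\ring(I))$, and $\lin_A^m(\ring(I))$ is an $\ring(I)$-submodule of $\ring(I)^N$.

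For finite generation, it suffices that $\ring(I)$ is Noetherian, since then $\ring(I)^N$ is a Noetherian module and every submodule of it is finitely generated. For $\R[x]|_I$ this is clear. Because $I$ is an infinite set, restriction $\R[x]\to\R[x]|_I$ is injective, so $\R[x]|_I\cong\R[x]$, which is Noetherian by Hilbert's basis theorem.

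For $\cN(I)$, I will invoke the classical theorem (Risler, Efroymson, Mostowski; see Bochnak--Coste--Roy, \emph{Real Algebraic Geometry}, \S8) that the ring of Nash functions on an open semialgebraic subset of $\R^n$ is Noetherian. This citation is the main obstacle, in the sense that it is the only nonformal input.

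If a self-contained argument is preferred in the univariate case, I would argue as follows. A nonzero $h\in\cN(I)$ is real analytic, and, by semialgebraicity, it has only finitely many zeros, each of finite multiplicity. Any nonzero ideal $J$ then contains a nonzero element $h_0$, and every $h\in J$ has divisibility behaviour governed by this finite zero data. Dividing by $\prod(x-x_k)^{e_k}$ multiplied by a Nash unit shows that $J$ is principal, generated by a polynomial times a unit. In particular, $\cN(I)$ is a principal ideal domain, hence Noetherian, which again yields finite generation.
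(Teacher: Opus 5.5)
Your proof is correct and follows the paper's argument: the polarization identity $q=\bigl(\tfrac{q+1}{2}\bigr)^2-\bigl(\tfrac{q-1}{2}\bigr)^2$ upgrades stability under $\mathcal R_+(I)$ to stability under all of $\mathcal R(I)$, and finite generation then follows from Noetherianity of $\R[x]$ and of $\mathcal N(I)$, the latter cited from Bochnak--Coste--Roy just as in the paper. Your optional PID argument is not needed, and as sketched it only shows $J\subseteq p\,\mathcal N(I)$ for $p=\prod_k(x-x_k)^{e_k}$; to get $p\in J$ you still need a step such as choosing finitely many $h_i\in J$ for which the $h_i/p$ have no common zero, so that $\sum_i (h_i/p)^2$ is a Nash unit and $p\sum_i(h_i/p)^2=\sum_i (h_i/p)\,h_i\in J$.
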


\begin{proof}

Since $\cone_A^m(\ring(I))$ is a convex cone, $\lin_A^m(\ring(I))$ is an additive subgroup of $\ring^N(I)$. We claim that it is stable under multiplication by arbitrary elements of $\ring(I)$. 

First of all, $\lin_A^m(\ring(I))$ is stable under multiplication by $\ring_+(I)$, i.e., for every $q\in\ring_+(I)$, 
\begin{equation} 
qf\in\cone_A^m(\ring(I)) 
\text{\quad and\quad }
-qf=q(-f)\in
\cone_A^m(\ring(I)) \quad \implies \quad qf \in \lin_A^m(\ring(I)).
\label{eq:qf in lin}
\end{equation} 

Let $r\in\ring(I)$ be arbitrary. We make use of the elementary identity 
\begin{equation} 
r = \brac{\frac{r+1}{2}}^2 - \brac{\frac{r-1}{2}}^2 =: r_+ - r_-
\label{eq:difference of nonnegative elements} 
\end{equation} 
so that $r_\pm \in \ring_+(I)$. If $f\in\lin_A^m(\ring(I))$, by \eqref{eq:qf in lin}, $r_\pm f \in \lin_A^m(\ring(I))$. 
Since $\lin_A^m(\ring(I))$ is an additive subgroup, it follows that 
\begin{equation*} 
rf = r_+f-r_-f \in \lin_A^m(\ring(I)). 
\end{equation*} 
Therefore, $\lin_A^m(\ring(I))$ is an $\ring(I)$-submodule of $\ring^N(I)$. It remains to prove finite generation. 

\begin{itemize}
    \item Suppose $\ring(I)=\R[x]|_I$. Since $\R[x]$ is Noetherian, so is $\ring(I)$. 

    \item Suppose $\ring(I)=\cN(I)$. Since $I$ is open and semialgebraic, it is a Nash manifold, so $\cN(I)$ is Noetherian by~\cite[Theorem 8.7.18]{bookRAG}. 
\end{itemize}

For both cases, the free module $\ring^N(I)$ is a Noetherian $\ring(I)$-module, and every $\ring(I)$-submodule of $\ring^N(I)$ is finitely generated. In particular, $\lin_A^m(\ring (I))$ is finitely generated. This completes the proof.

\end{proof}

\begin{proof}[Proof of Theorem~\ref{thm:finite generation regular interval}]
    \renewcommand{\ring}{\cN}
    
    Let $f\in\cone_{A,I}^m(\ring(I))$.
    By Proposition~\ref{prop:solution on regular interval}, there exists 
    $F\in\cN(I,\R^M) $
    such that $AF\leq f$ on $I$. Let $\sigma:= f - AF$, so that $\sigma\in\ring_+(I)^N $. The vector $AF$ belongs to $\lin_{A,I}^m(\ring(I))$, since both $AF$ and $-AF=A(-F)$ belong to $\cone_{A,I}^m(\ring(I))$. Therefore, $[f]=[\sigma]$ as equivalent classes in $\ring(I)^N \big/ \lin_{A,I}^m(\ring(I))$. We can write $\sigma = \sum_{i=1}^N \sigma_ie_i$ where each $\sigma_i \in \ring_+(I)$, so that $[f] = \sum_i \sigma_i[e_i]$. This proves the theorem. 
\end{proof}

\medskip

 However, the situation becomes trickier when we deal with rings of polynomials as illustrated in the next theorem. This forms a sharp contrast with the main result from~\cite{FeffermanLuli2021Generators,FeffermanLuli2021Equations}.

 \begin{theorem}
 \label{thm:quotient module polynomial not FG} 
 Let $I:= (-\frac{1}{2},\frac{1}{2})$, $a(x) := \sqrt{1+x}$, and $A = (1,-a(x))$, so that $I$ is an open semialgebraic subinterval of a regular interval for $A$. Let $\ring(I)$ denote the ring of $\R[x]$ restricted to $I$. Then
 \begin{equation*}
     \lin_A^m(\ring (I)) = \set{0}
     \label{eq:trivial lineality module}
 \end{equation*}
 and $\cone_A^m(\ring(I))\big/\lin_A^m(\ring(I)) \cong \cone_A^m(\ring(I))$ is not finitely generated as a semimodule over $\ring_+(I)$.
 \end{theorem}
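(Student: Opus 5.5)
We read $A=(1,-a(x))$ as a column, that is, $N=2$ rows and $M=1$ unknown. This is the only reading under which the statement is non-trivial. The system $AF\leq f$ is then
\begin{equation*}
F\leq f_1 \quad\text{and}\quad -aF\leq f_2 \qquad\text{on } I.
\end{equation*}
The plan is to show that $\cone_A^m(\ring(I))$ consists of the $f$ with $af_1+f_2\geq0$ on $I$. Irrationality of $a$ will then kill the lineality module. Finally, a vanishing-order count at $0$ will rule out finite generation.

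\textbf{Step 1 (description of the cone).} Since $a>0$ on $I$, the system reads $-f_2/a\leq F\leq f_1$. So a solution exists iff $\phi_f:=af_1+f_2\geq0$ on $I$. When $\phi_f\geq0$, the polynomial $F=f_1$ is a semialgebraic $C^m$ solution. Hence
\begin{equation*}
\cone_A^m(\ring(I))=\set{f\in\ring(I)^2:\ af_1+f_2\geq 0 \text{ on } I}.
\end{equation*}

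\textbf{Step 2 (trivial lineality).} If $f$ and $-f$ are both in the cone, then $af_1+f_2\equiv0$ on $I$. If $f_1\neq0$, this gives $\sqrt{1+x}=-f_2/f_1$ on an interval, so $\sqrt{1+x}$ would be a rational function. That is impossible, since squaring would make $1+x$ a square in $\R(x)$. Hence $f_1=0$, which forces $f_2=0$. Thus $\lin_A^m(\ring(I))=\set{0}$, and the quotient map restricted to the cone is the identity.

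\textbf{Step 3 (non-finite generation).} Suppose $u_1,\ldots,u_r$ generate the cone over $\ring_+(I)$. For each $\alpha$, the function $\phi_\alpha:=\phi_{u_\alpha}$ is a nonnegative Nash function. It is nonzero whenever $u_\alpha\neq0$, by Step 2. So it has a finite order $k_\alpha$ at $0$.

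For odd $n$, let $T_n$ be the degree-$n$ Taylor polynomial of $a$ at $0$. By Lagrange's remainder,
\begin{equation*}
a-T_n=\tfrac{a^{(n+1)}(\xi)}{(n+1)!}\,x^{n+1},
\end{equation*}
where $a^{(n+1)}$ has constant nonzero sign $\epsilon_n$ on $I$, because the binomial coefficients never vanish. Set $f^{(n)}:=\epsilon_n(1,-T_n)$. Then $\phi_{f^{(n)}}=\epsilon_n(a-T_n)\geq0$, so $f^{(n)}$ lies in the cone. Moreover $\phi_{f^{(n)}}$ has order exactly $n+1$ at $0$.

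Write $f^{(n)}=\sum_\alpha q_\alpha u_\alpha$ with $q_\alpha\in\ring_+(I)$. Then
\begin{equation*}
\phi_{f^{(n)}}=\sum_\alpha q_\alpha\phi_\alpha .
\end{equation*}
This is a sum of nonnegative analytic terms, so no cancellation of leading terms can occur. Each term therefore satisfies $\ord_0(q_\alpha\phi_\alpha)\geq n+1$, which gives $\ord_0(q_\alpha)\geq n+1-\max_\alpha k_\alpha$. Now choose $n>\max_\alpha k_\alpha$. Then every $q_\alpha$ vanishes at $0$. Evaluating the first coordinate $\epsilon_n=\sum_\alpha q_\alpha u_{\alpha,1}$ at $0$ gives $\pm1=0$, a contradiction.

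The main point needing care is the no-cancellation claim. It uses that a sum of nonnegative germs has order equal to the minimum of the orders of the summands. It also uses the exact order $n+1$ of $a-T_n$, which is guaranteed by the nonvanishing of $a^{(n+1)}(0)$.
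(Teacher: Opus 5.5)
Your proof is correct. Steps 1 and 2 agree with the paper; your explicit solution $F=f_1$ is simpler than the paper's appeal to Proposition~\ref{prop:nash weak lattice}. Step 3, however, is a genuinely different argument.

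The paper works globally and pointwise. Evaluation at each $x_0\in I$ maps the cone onto the half-plane $H_{x_0}=\set{(u,v): a(x_0)u+v\geq 0}$. A finitely generated conic hull equal to a half-plane must have a nonzero generator on its boundary line. Hence $I$ is covered by the finitely many semialgebraic sets $Z_\nu=\set{x: a p_\nu+q_\nu=0,\ w_\nu\neq 0}$. One of them contains an open interval, and irrationality of $a$ is contradicted a second time.

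You instead argue at the single point $0$ by a contact-order argument. Irrationality is used only once, to guarantee that each nonzero generator has a defect $\phi_\alpha$ of finite order at $0$. The Taylor-truncation elements $f^{(n)}$ have unit first coordinate and defect of order $\geq n+1$. Since nonnegative analytic germs cannot cancel, every coefficient $q_\alpha$ is forced to vanish at $0$.

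The paper's route needs no specially constructed cone elements, only pointwise surjectivity onto $H_{x_0}$. Yours shows that the obstruction already lives in the germ at one point, and it avoids the semialgebraic pigeonhole. The constant sign of $a^{(n+1)}$ is also not essential to your argument: $(1,-T_n+Cx^{n+1})$ with $C\geq \sup_I\abs{a^{(n+1)}}/(n+1)!$ lies in the cone as well.

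Two small remarks. Only $\ord_0\phi_{f^{(n)}}\geq n+1$ is used, not the exactness of that order. Zero generators should be discarded explicitly before taking $\max_\alpha k_\alpha$.
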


\begin{proof}

We write elements of $\cone_A^m(\ring(I))$ as $(p,q)$, so the system of inequalities in question becomes
$A(x)F(x) 
\leq (p(x),q(x))$
which is the same as
\begin{equation*} 
F(x)\leq p(x) \qquad\text{and}\qquad -a(x)F(x)\leq q(x). 
\end{equation*} 
Since $a(x)>0$ on $I$, Proposition~\ref{prop:nash weak lattice} implies that a solution $F$ exists if and only if $a(x)p(x)+q(x)\geq0 $ for all $x \in I$.
Consequently, 
\begin{equation} 
\cone_{A,I}^m(\ring(I)) 
= \set{
(p,q)\in\ring(I)^2: ap+q\geq0\text{ on }I 
}. 
\label{eq:explicit polynomial cone} 
\end{equation} 

We first determine the lineality space. If 
\begin{equation*}
    (p,q)\in\lin_{A,I}^m(\ring(I)) = \cone_A^m(\ring(I)) \cap (-\cone_A^m(\ring(I)))
\end{equation*}
then the feasibility condition in~\eqref{eq:explicit polynomial cone} forces
\begin{equation} 
ap+q\equiv 0 \qquad\text{ on }I. 
\label{eq:lineality rational relation} 
\end{equation} 
We claim that this forces $p=q=0$. Suppose otherwise, that $p\not\equiv0$, then \eqref{eq:lineality rational relation} gives $a = -\frac{q}{p}$ on every subinterval of $I$ on which $p$ is nonzero. Thus $a$ would be a rational function, which is a contradiction. Therefore $p\equiv0$, and \eqref{eq:lineality rational relation} then implies $q\equiv0$. Therefore,~\eqref{eq:trivial lineality module} holds, and
\begin{equation*}
    \cone_A^m(\ring(I)) \cong \cone_A^m(\ring(I))\Big / \lin_A^m(\ring(I))
\end{equation*}
where the identification is canonical.

Now, suppose toward a contradiction, that $\cone_A^m(\ring(I)) $ is finitely generated as a semimodule over $\ring_+(I)$, which means that there exist finitely many vectors 
\begin{equation*} 
w_\nu = (p_\nu,q_\nu)
\in
\cone_{A,I}^m(\ring(I)) \quad , \quad \nu=1,\ldots,s, 
\end{equation*} 
such that 
\begin{equation} \cone_{A,I}^m(\ring(I)) = \sum_{\nu=1}^s\ring_+(I)w_\nu. 
\label{eq:assumed polynomial finite generation} 
\end{equation}
The goal is to again find suitable polynomials $p$ and $q$ such that $a = \frac{q}{p}$.

Fix $x_0\in I$. 
We define a linear functional 
\begin{equation*} 
\ell_{x_0}(u,v) := a(x_0)u+v 
\end{equation*} 
and an associated closed half-space 
\begin{equation*} 
H_{x_0} 
:= 
\set{ 
(u,v)\in\R^2: \ell_{x_0}(u,v)\geq0 }. 
\end{equation*} 
We claim that 
\begin{equation} 
\set{ (p(x_0),q(x_0)): (p,q)\in\cone_{A,I}^m(\ring(I)) } = H_{x_0}. 
\label{eq:evaluation onto halfspace} 
\end{equation}

The $\subseteq$ direction follows immediately from \eqref{eq:explicit polynomial cone}. 

To prove $\supseteq$, take $(u,v)\in H_{x_0}$, 
set $p(x)\equiv u$, and set $q(x):= v - ua'(x_0)(x-x_0) + K(x-x_0)^2$ where $K \geq \frac{\abs{u}\norm{a''}_{L^\infty(I)}}{2}$. Then
\begin{equation*}
    \begin{split}
        a(x)p(x) + q(x) &= \big[a(x_0)u + v\big] + u\big[
        a(x) - a(x_0) - a'(x_0)(x-x_0)
        \big] + K(x-x_0)^2
        \\
        &\geq \big[a(x_0)u + v\big] + \brac{K - \abs{u}\frac{\norm{a''}_{L^\infty(I)}}{2}}(x-x_0)^2 \geq 0 \text{\quad for all $x \in I$.}
    \end{split}
\end{equation*}
This shows the reversed inclusion.

Consider the identity \eqref{eq:assumed polynomial finite generation} at $x_0$. We see that
\begin{equation} 
H_{x_0} = \ring_+(I)\text{-}\operatorname{cone} 
\set{ w_1(x_0),\ldots,w_s(x_0) }. 
\label{eq:pointwise finitely generated halfspace} 
\end{equation} 
with boundary
\begin{equation*} 
\partial H_{x_0} = \set{ (u,v)\in\R^2: a(x_0)u+v=0 }. \end{equation*} 
We claim that 
\begin{equation}
    \text{there exists $\nu$ with $w_\nu(x_0)\neq 0$ and $w_\nu(x_0)\in \partial H_{x_0}$.}
    \label{eq:wnu nonzero} 
\end{equation}
To see this, choose any nonzero vector $z\in\partial H_{x_0}$.
By \eqref{eq:pointwise finitely generated halfspace}, there exist constants $\gamma_\nu\geq0$ such that 
\begin{equation*} 
z = \sum_{\nu=1}^s\gamma_\nu w_\nu(x_0)
\text{\quad and }
\ell_{x_0}(z) = \sum_{\nu=1}^s \gamma_\nu \ell_{x_0}\bigl(w_\nu(x_0)\bigr) = 0.
\end{equation*} 
Since each $w_\nu$ belongs to the admissible cone, every term on the right-hand side is nonnegative, so $\ell_{x_0}\brac{w_{\nu}(x_0)} = 0$
whenever $\gamma_\nu>0$. Since $z\neq0$, \eqref{eq:wnu nonzero} holds.

For $\nu=1,\ldots,s$, define semialgebraic subsets
\begin{equation*} 
Z_\nu := \set{ 
x\in I: a(x)p_\nu(x)+q_\nu(x)=0 
\text{ and } w_\nu(x)\neq0 
}. \end{equation*} 
The preceding argument shows that 
\begin{equation*} 
I = \bigcup_{\nu=1}^s 
Z_\nu.
\label{eq:finite boundary cover} 
\end{equation*} 
The union is finite, so there exist an index $\nu_0$ and a nonempty open subinterval $J\subset I$ such that 
$J\subseteq Z_{\nu_0}$.
Therefore,
\begin{equation} 
a(x)p_{\nu_0}(x)+q_{\nu_0}(x)=0 
\qquad \text{for every }
x\in J. 
\label{eq:generator on moving boundary} 
\end{equation} 
Since $w_{\nu_0}(x)\neq0$ on $J$, the two polynomials $p_{\nu_0}$ and $q_{\nu_0}$ do not both vanish identically. If $p_{\nu_0}\equiv0$, then \eqref{eq:generator on moving boundary} would imply that $q_{\nu_0}$ vanishes on $J$ and hence is identically zero, a contradiction. 
Therefore, $p_{\nu_0}\not\equiv 0$. It follows from \eqref{eq:generator on moving boundary} that \begin{equation*} 
a(x) = -\frac{q_{\nu_0}(x)}{p_{\nu_0}(x)} 
\text{\quad is rational on a nonempty open subinterval of $J$}
\end{equation*} 
which is a contradiction.

As a consequence, $\cone_{A,I}^m(\ring(I))\cong \cone_A^m(\ring(I))\Big / \lin_A^m(\ring(I))$ is not finitely generated as a semimodule over $\ring_+(I)$. 
\end{proof}

\subsection{Obstruction in higher dimension}

The preceding finite-generation result is inherently one-dimensional. It was already shown in~\cite{LuliONeill2023Inequalities} that a finite characterization for solvability of~\eqref{eq:system main} is not always possible. Moreover, in two variables, finite generation can fail even for a positivity cone with an elementary finite description. 

We demonstrate this by the following elementary example from

\begin{example}\label{example:qjet}
\newcommand{\psd}{\operatorname{PSD}}
Let $\ring:= \R[x,y]$ and $\ring_+:= \set{ Q\in\ring: Q\geq0\text{ on }\R^2 }$. 
Consider the $\ring_+$-semimodule \begin{equation*} \cone := \set{ P\in\R[x,y]: P\geq0\text{ on }\R^2 \text{ and } P(0,0)=0 }. 
\end{equation*} 
Although $\cone$ has this elementary description, it is not finitely generated as a semimodule over $\ring_+$. To see this, let $\dot\jet^2_0 P$ denote the homogeneous quadratic part of $P$ at the origin. 

Since every $P\in\cone$ has a local minimum at the origin, its constant and linear terms vanish, and $\dot\jet^2_0P$ is a positive semidefinite quadratic form. Conversely, every positive semidefinite quadratic form belongs to $\cone$. Therefore, 
\begin{equation} 
\set{ \dot\jet^2_0 P: P\in\cone } = \set{ ax^2+2bxy+cy^2: \begin{pmatrix} a&b\\ b&c \end{pmatrix} 
\in \operatorname{PSD}(\R^2,\R^2) }. \label{eq:quadratic jets positive cone} \end{equation}

Suppose toward a contradiction that 
\begin{equation*} 
\cone = \sum_{\nu=1}^s\ring_+P_\nu 
\text{\quad for some $P_1,\ldots,P_s\in\cone$. }
\end{equation*} 
Let $Q\in\ring_+$. If $Q(0,0)>0$, then \begin{equation*} 
\dot\jet^2_0 (QP_\nu) = Q(0,0)\dot\jet^2_0P_\nu. \end{equation*} 
If $Q(0,0)=0$, then the nonnegativity of $Q$ implies that its constant and linear terms vanish. Since the same is true of $P_\nu$, the product $Q\cdot P_\nu$ vanishes to order at least four, so $\dot\jet^2_0(QP_\nu)\equiv 0$.
It follows that 
\begin{equation} 
\set{ \dot\jet^2_0P: P\in\cone } = \operatorname{cone} \set{ \dot\jet^2_0P_1,\ldots,\dot\jet^2_0P_s }. 
\label{eq:finite quadratic jet cone} \end{equation} 
On the other hand, for every $t\in\R$, the polynomial 
\begin{equation*} 
P_t(x,y):=(x+ty)^2 \in \cone
\end{equation*} 
and its quadratic form is given by the matrix $\begin{pmatrix} 1&t\\ t&t^2 \end{pmatrix} = (1,t)(1,t)^\top$.
Each such matrix spans an extreme ray of cone of $2\times2$ positive semidefinite matrices $\psd(\R^2)$, and distinct values of $t$ determine distinct extreme rays. Hence $\psd(\R^2)$ has infinitely many extreme rays and is not a finitely generated polyhedral cone, contradicting~\eqref{eq:finite quadratic jet cone}. Therefore, $\cone$ is not finitely generated over $\ring_+$.

\end{example}

\subsection{A subsequent algebraic question}

To conclude this paper, we pose a question and a remark. 

\begin{question}
\label{quest:effective algebraic generators} 
Let $A$ be a matrix of univariate semialgebraic functions, let $I\subset\R\setminus\Sigma_A$ be a regular interval in the sense of Definition~\ref{def:regular interval exception point}, and let $m\in\N_0$. 
\begin{enumerate}[(A)] 
\item Let the base ring $\ring(I) := \cN(I)$. Can a finite set of generators for the lineality module $\lin_{A,I}^m(\cN(I)) $
be computed effectively from $A$ and $m$? Can one also effectively compute the relations among the coordinate classes \begin{equation*} [e_1],\ldots,[e_N] \in \cN(I)^N\Big/\lin_{A,I}^m(\cN(I)), \end{equation*} 
so that one can obtain a minimal set of generators for $\cone_{A,I}^m(\cN(I)) \Big/ \lin_{A,I}^m(\cN(I))$ as a semimodule over $\cN_+(I)$?

\item Let the base ring $\ring(I):=\R[x]|_I$, and let $A \in \R[x]^{N\times M}$ be a matrix of univariate polynomials. Is $\cone_{A,I}^m(\ring(I)) \Big/ \lin_{A,I}^m(\ring(I))$ finitely generated as a semimodule over $\ring_+(I)$? More generally, under what additional algebraic conditions does such finite generation hold, and can a finite set of generators be computed effectively from $A$ and $m$? 
\end{enumerate} 
\end{question}

As mentioned in the introduction, for a system of \emph{linear equations} on $\Rn$ and the base ring is the ring of polynomials in $n$ unknowns,~\cite{FeffermanLuli2021Generators} provides an algorithm to compute the generators for the module of admissible forcings. Question~\ref{quest:effective algebraic generators}(A) is asking for the analogue for systems of inequalities. Again, the univariate and Nash assumptions here are more or less optimal, as we have seen from Theorem~\ref{thm:quotient module polynomial not FG} and Example~\ref{example:qjet}. On the other hand, Question~\ref{quest:effective algebraic generators}(B) takes us to a different direction, which calls for a more algebraic argument beyond the methods developed in this paper. Its \emph{equations} analogue has been extensively studied. See, e.g.,~\cite{kollarnowak2015rational,EpsteinHochster2018,kucharzkurdyka2017linear}.

\begin{remark}
\label{rem:saturation and algebraization} 

The preceding questions may also be equivalently expressed using the $\sa^m$-saturation. Abstractly, let $\module\subseteq\ring(I)^N $ be an $\ring(I)$-module, and let $\qmodule\subseteq\ring(I)^N$ be a semimodule over $\ring_+(I)$. 
 We define their extensions to $\sa^m(I,\R)$ by 
 \begin{equation*} 
\begin{split}
     \sa^m(I)\module &:= \set{ 
 \sum_{\ell=1}^k\varphi_\ell m_\ell: 
 \varphi_\ell\in\sa^m(I,\R),\ 
 m_\ell\in \module,\ k\in\N 
 } 
 \\
 \sa_+^m(I)\qmodule &:= \set{ \sum_{\ell=1}^k\psi_\ell q_\ell: \psi_\ell\in\sa_+^m(I),\ q_\ell\in\qmodule,\ k\in\N }.
\end{split}
 \end{equation*} 
The $\sa^m$-saturation of the pair $(\module,\qmodule)$ is defined by 
 \begin{equation*} 
 \sat_{\sa^m,I}(\module,\qmodule) 
 := \ring(I)^N \cap 
 \brac{\sa^m(I)\module + \sa_+^m(I)\qmodule}.
 \label{eq:Cm saturation} 
 \end{equation*} 

The unresolved issue in Question~\ref{quest:effective algebraic generators}(A) is the effective computation of the lineality module and of the relations among these coordinate classes. 

The polynomial Question~\ref{quest:effective algebraic generators}(B) contains an additional algebraic problem. Even when $A$ and $f$ are polynomial, the selection and slack variable produced by the arguments of this paper may bnot be a polynomial, so the saturation in \eqref{eq:Cm saturation} may be a nontrivial operation. 
\end{remark}

\section*{Acknowledgment}

The first author is supported by a grant from the City University of Hong Kong (Project No.\ 7200844). The second author is supported in part by NSF grant 2247429, the Chancellor's Fellowship at UC Davis, and the Simons Gift Fund.

\bibliographystyle{plain}
\bibliography{bib}

\end{document}